\numberwithin{equation}{section}
 \renewcommand\section{\@startsection {section}{1}{\z@}%
     {-4.5ex \@plus -1ex \@minus -.2ex}%
     {2.3ex \@plus.8ex}%
    {\centering\scshape}}
\def\A{\mathcal{A}}
\def\M{\mathcal{M}}
\def\C{\mathcal{C}}
\def\D{\mathcal{D}}
\def\E{\mathcal{E}}
\def\F{\mathcal{F}}
\def\H{\mathcal{H}}
\def\I{\match{I}}
\def\P{\mathcal{P}}
\def\Q{\mathcal{Q}}
\def\X{\mathcal{X}}
\def\Y{\mathcal{Y}}
\def\cO{\mathcal{O}}
\def\R{\mathcal{R}}
\def\V{\mathcal{V}}
\def\U{\mathcal{U}}
\def\K{\mathcal{K}}
\def\Y{\mathcal{Y}}
\def\W{\mathcal{W}}
\def\Z{\mathcal{Z}}
\def\P{\mathcal{P}}
\def\I{\mathcal{I}}
\def\AA{\mathbb{A}}
\def\PP{\mathbb{P}}
\def\CC{\mathbb{C}}
\def\ZZ{\mathbb{Z}}
\def\QQ{\mathbb{Q}}
\def\RR{\mathbb{R}}
\def\GG{\mathbb{G}}
\def\x{\mathbf{x}}
\def\ov#1{\overline{#1}}
\def\codim{\mathrm{codim}}
\def\Pic{\operatorname{Pic}}
\def\Hom{\operatorname{Hom}}
\def\Rhom{\operatorname{RHom}}
\def\Kom{\operatorname{Kom}}
\def\RG{\operatorname{R\Gamma}}
\def\Ext{\operatorname{Ext}}
\def\spec{\operatorname{Spec}}
\def\supp{\operatorname{Supp}}
\def\Stab{\operatorname{Stab}}
\def\Rep{\operatorname{Rep}}
\def\Lie{\operatorname{Lie}}
\def\Aut{\operatorname{Aut}}
\def\Rep{\operatorname{Rep}}
\def\ch{\operatorname{ch}}
\def\td{\operatorname{td}}
\def\GL{\operatorname{GL}}
\def\Amp{\operatorname{Amp}}
\def\rk{\operatorname{rk}}
\DeclareMathOperator{\gl}{\mathfrak {gl}}
\def\Stab{\operatorname {Stab}}
\def\Def{\operatorname {Def}}
\def\coh{\operatorname {Coh}}
\def\ext{{\E}xt}
\def\hom{{\H}om}
\def\tb{\text{\tiny$\bullet$}}
\def\ftb{\text{\footnotesize$\bullet$}}
\def\wt{\widetilde}
\def\wh{\widehat}
\def\oc2{\mathcal{O}_{\C_2}}
\def\blm{BLM${}^+$19}
\newcommand\rep{\operatorname{Rep}}
\DeclareMathOperator{\slope}{slope}
\def\Ku{\operatorname{Ku}}
\def\num{\operatorname{num}}
\def\blm{BLM${}^+$19}
\newtheorem{theorem}{Theorem}[section]
\newtheorem{prop}[theorem]{Proposition}
\newtheorem{lem}[theorem]{Lemma}
\newtheorem{rem}[theorem]{Remark}
\newtheorem{cor}[theorem]{Corollary}
\newtheorem{claim}[theorem]{Claim}
\NewDocumentCommand{\dslash}{s}{%
  \IfBooleanTF{#1}
    {\big/\mkern-7mu\big/}
    {/\mkern-6mu/}%
}
\newcommand{\be}{\begin{equation}}
\newcommand{\ee}{\end{equation}}
\begin{document}

\author{Enrico Arbarello and  Giulia Sacc\`a}

\title{Singularities of Bridgeland moduli spaces for K3 categories:  an update}

\begin{abstract} This survey is a continuation
of the study undertaken  in \cite{AS18}.  We examine the local structure of Bridgeland moduli spaces $M_\sigma(v,\D)$, where the relevant triangulated category $\D$ is either the bounded derived category $\D=\D^b(X)$ of  a K3 surface $X$, or the  Kuznetsov component $\D=\Ku(Y)\subset \D ^b(Y)$ of a smooth cubic fourfold $Y\subset \PP^5$. For these moduli spaces, building on \cite{Bmm19},
\cite{Bmm21} we give a direct proof of formality and, using their local isomorphism with quiver varieties,  we establish their
normality and their irreducibility, as long as $\sigma$ does not lie on a totally semistable wall. We then connect the variation of GIT quotients 
for quiver varieties with the changing of stability conditions on moduli spaces.

\end{abstract}

\dedicatory{In memory of Alberto Collino}

\maketitle
\tableofcontents

\section*{Introduction.}\label{intro} 
Let
$\D$ denote either the bounded derived category $\D=\D^b(X)$ of  a K3 surface $X$, or the Kuznetsov component $\D=\Ku(Y)\subset \D ^b(Y)$ of a smooth cubic fourfold $Y\subset \PP^5$. 
The aim of this survey is to give an account of various developments in the local analysis of moduli spaces of objects in $\D$. This analysis started  with the seminal  work of Kaledin, Lehn and Sorger \cite{KLS06},  \cite{KL07}. In  \cite{AS18} we followed the path they traced,  we introduced the so-called  ext-quiver, and we showed, among other things, that the variation of GIT quotients for quiver varieties governs the (partial) resolution of  singularities of the  moduli spaces $M_H(v, X)$ of pure dimension one sheaves  when they undergo a change of the polarization $H\in \Amp(X)$. There are two constituents  to that local analysis:  Luna's  \'etale slice theorem for the Quot scheme and the quadraticity (or, even strongly, the formality) of the Kuranishi family. The first  provides the \'etale local picture of moduli spaces of sheaves on a K3 surface, obtained as global quotient of the Quot scheme; the second  builds the bridge between this local picture and the ext-quiver. For  the case of sheaves on a K3 surface, the formality  theorem is the result of  several contributions (see Section \ref{kura-fa} for references) and was finally established by Bandiera, Manetti and Meazzini \cite{Bmm19}, and \cite{Bmm21} (see also \cite{BZ19}).
The picture changes substantially when passing to Bridgeland moduli spaces. Let $M_\sigma(v,\D)$
be the moduli space of $\sigma$-semistable objects with Mukai vector $v$, with respect to a Bridgeland stability condition $\sigma$ on $\D$.
These spaces  are not constructed as a GIT quotients of a Quot scheme. Indeed, their construction, in full generality, that is for arbitrary $\sigma$ and $v$, has only been accomplished rather recently through the groundbreaking work by Alper,  Hall, Rhyd, Halpern-Leistner,  Heinloth and Bayer, Lahoz, Macr\`i, Nuer, Perry, and  Stellari  \cite{BLM${}^+$19} . In Section \ref{good-mod} we ease into  an overview of these fundamental ideas. Regarding the formality of the Kuranishi family for Bridgeland moduli spaces,
we will give  direct and rather simple proofs based on the work by Bandiera, Meazzini, and  Manetti \cite{Bmm19} for the K3 case,
and on ideas of Kaledin \cite{Ka07} and Lunts \cite{Lunts08} for the Kuznetsov case. We should mention that a very general statement of formality for 2-Calabi Yau categories (and therefore including the two cases we just mentioned)  has been proved by Davison in \cite{Dav21}.  In Section \ref{norm} we address the question of normality for Bridgeland moduli spaces. Normality has been previously established, under the hypothesis  that the stability condition $\sigma$ be $v$-generic, in \cite[Theorem 3.10]  {BM-mmp14} for the K3 case and \cite[Corollary 2.4]{Sa23} for the case of the Kuznetsov component. When $\sigma$ lies on a $v$-wall,  the situation is more complicated. However, we succeed in proving that 
$M_\sigma(v,\D)$ is normal, even when $\sigma$ lies on a $v$-wall, as long as $\sigma$ does not lie on a totally semistable wall. In the proof of this theorem we take full advantage of the local  ext-quiver description of moduli spaces. A direct consequence of the normality result  is the irreducibility of $M_\sigma(v,\D)$, again as long as $\sigma$ does not lie on a totally semistable wall.
In Section \ref{VGIT}, we proceed  to develop the local analysis of Bridgeland moduli spaces by interpreting the passage of a stability condition from a given stratum of a wall to a shallower one, in terms of variation of GIT quotients for  the ext-quiver variety (see also \cite{Hal21}). In this regard, the context provided 
by the above mentioned paper \cite{AHLH22} makes the treatment much simpler than the one 
we described in the case of sheaves in \cite{AS18}.
\vskip 0.1 cm
\begin{rem}\label{G-M}{\rm It should be noticed that all the results in this paper concerning the Kuznetsov component of a cubic fourfold, work verbatim for the Kuznetsov component
of a Gushel-Mukai variety in dimension 4 or 6, provided one replaces the results 
in \cite{BLM${}^+$19}, with the ones in \cite{PPZ19}.}
\end{rem}
\vskip 0.2 cm
{\bf Acknowledgments}. We warmly  thank Jarod Alper, Jochen Heinloth, Emanuele Macr\`i, and  Francesco Meazzini for  valuable conversations on the subject of this note, and the referee for useful comments. We also thank Laura Pertusi for sharing the manuscript \cite{CPZ21}.The second named author is partially supported by NSF CAREER
grant DMS-214483, and NSF FRG grant DMS-2052750.

\section{Notation}\label{notation}
We work over the field of complex numbers: $k=\CC$.
\vskip 0.2 cm

We let $\D$ be as in the Introduction.
For  Kuznetsov components 
we  adopt the notations of  Part IV, Section 29, p. 295 in \cite{BLM${}^+$19}, and  also in  \cite{BLMS17} 
We will study {\it numerical}  stability conditions on $\D$, w.r.t. a lattice $\Lambda$.
In the $K3$-case, we set $\Lambda=H^*_{alg}(X,\ZZ)=K_{num}(X)$, while for Kuznetsov components we set 
$\Lambda =K_{num}(\Ku(Y))$, where $K_{num}$ denotes the numerical Grothendieck group  \cite[Def 2.20]{MSt19}. The Mukai-vector homomorphism $v: K_0(\D)\to K_{num}(\D)$ is given by $v(E)=\ch(E)\cdot\sqrt{\td(X)}$ in the K3 case, and by $v(E)=\ch(E)\cdot\sqrt{\td(Y)}$ in the Kuznetsov case.

\vskip 0.2 cm
We recall that  a {\it numerical} stability condition $\sigma$ on $\D$, consists in a pair 
$(\A, Z)$ where $\A\subset \D$ is the heart of a $t$-structure, and $Z$ is a stability function, i.e.
an additive  function $Z: K_0(\A)\longrightarrow \CC$, factoring through $\Lambda$, in the sense that
for $E\in \A$, then $Z(E)=Z(v(E))$. 
A stability function should satisfy the following property:
 $$
 0\neq E\in \A\qquad\Rightarrow\qquad Z(E)=Z(v(E))\in \RR_{>0}e^{i\pi\phi(E)}\,,\qquad 0<\phi(E)\leq 1
 $$
 The number $\phi(E)$ is the {\it phase} of $E$. The
{\it slope} $\nu(E):=\Re Z(E))/\I m Z(E)$ is related to the phase
by the equation: $\phi(E)=\frac{1}{\pi}\mbox{arccot}(-\nu(E))$.
 An object $E\in \A$ is said to be $\sigma$-{\it semistable} if there is no non-zero sub-object of $E$ with larger phase. A (numerical) stability
 condition should also satisfy the Harder-Narasimhan  property: every $E\in \A$ admits a filtration 
 $$
 0=E_0\subset E_1\subset E_2\subset\cdots\subset E_{n-1}
 \subset  E_n=E
 $$
 such that $E_j/E_{j-1}$ is $\sigma$- semistable, and   the sequence of phases $\{\phi (E_j/E_{j-1})\}$
 is strictly decreasing. Finally  a stability condition should  satisfies the {\it support property}: $\exists$ a norm $||\cdot||$  on $\Lambda$, such that
$|Z(v(E))|\geq c||v(E)||$, 
  for every$\sigma$-semistable $E$
 on $\Lambda$ e.g. \cite[Section 4]{MS17}.

 Bridgeland endows the set $\Stab(X)$ of stability conditions on $X$ with a topology making
 $\Stab(X)$ into a complex manifold. 
 Bridgeland's theorem states that
the natural projection
\be\label{Z-stab}
\aligned
\Z: &\Stab(X)\longrightarrow \Hom(\Lambda_\RR,\CC)\\
&\sigma=(\A, Z)\quad\mapsto \quad Z
\endaligned
\ee
is a local homeomorphism  onto its image.  
By definition, given a numerical stability condition $\sigma=(\A, Z)$, there exists
$\Omega_\sigma\in  \Lambda_\RR$ such that
\be\label{z-omega}
Z(E)=\langle\,\Omega_\sigma, v(E)\,\rangle:=-\chi(\Omega_\sigma, v(E))
\ee
It is also convenient to introduce the class $\omega_\sigma=-\Omega_\sigma^\vee$
so that
\be\label{z-omega2}
Z(E)=\chi(\omega_\sigma\otimes E)
\ee

It is  useful to pass from a  given stability condition to an "equivalent" one
where computations might be  easier. To do so, one sees a stability condition $\sigma$, as a pair  $\sigma=(\P, Z)$, where $Z:K_0(\D)\to \CC$ is a homomorphism,  and where $\P$ is a {\it slicing} of $\D$, i.e.  a collection of full (a posteriori abelian) subcategories $\{\P(\phi)\subset \D\}$, one 
for each $\phi\in \RR$, satisfying a certain number of properties \cite{Br08}, Definition 2.1 p.246]. The objects of $\P(\phi)$ are said to be $\sigma$-semistable of phase $\phi$, and for $E\in\P(\phi)$ one has $Z(E)=Z(v(E))\in \RR_{>0}e^{i\pi\phi(E)}$. The heart $\A$ can be recovered as the subcategory $\P(0,1]$ of objects whose  Harder-Narashiman factors have phases 
lying in the interval $(0,1]$. 
The universal cover $\wt\GL^+(2, \RR)$ of $\GL^+(2, \RR)$ acts on $\Stab(X)$
in the following way. Think of an element $\wt g\in  \wt\GL^+(2, \RR)$ as  a pair $\wt g=(f, g)$, where $g\in\GL^+(2, \RR)$, and $f:\RR\to\RR$ is an increasing function s.t. $f(\phi+1)=f(\phi)+1$, satisfying  the compatibility condition that: $g_{|S^1}:S^1\to\RR^2\smallsetminus\{0\}/\RR^+$ coincides with $f:S^1=\RR/\ZZ\to\RR/\ZZ$.
Then $\wt g\cdot (Z,\P)= (Z',\P')$, where $Z'=g^{-1}\circ Z$ and $\P'(\phi)
=\P(f(\phi))$. Given a stability  condition $\sigma=(Z, \A)$, and a Mukai vector $v$, by acting with $\wt\GL^+(2, \RR)$, one can always assume that $Z(v)=-1$, if one wishes. \vskip 0.2 cm
Finally we will always restrict Bridgeland's local homeomorphism (\ref{Z-stab}) to the 
$\wt\GL^+(2, \RR)$-invariant connected component
$$
\Stab^\dagger(\D)\subset \Stab(\D)
$$
containing the explicitly constructed stability conditions $\sigma_{\beta, \alpha}$, of \cite{Br08}, and  \cite{BLMS17}.

\vskip 0.3 cm

\section{Moduli stacks and moduli spaces}\label{good-mod}

\subsection{The classical picture}\label{classic}

Let us briefly collect the ingredients that go into the construction of the {\it classical moduli spaces of sheaves on a K3 surface} $X$.
\vskip 0.5 cm

{\bf 1)} First we need a  Mukai vector $v\in H^*_{alg}(X,\ZZ)$ 
and  polarization $H\in \Amp(X)$.
These two data determine, and are determined by, the Hilbert polynomial of $F$:
$$
P_{H}(F)(t):=\chi(F\otimes H^t)=\alpha_d(F)t^d+\cdots+\alpha_0(F)\,, \qquad2\geq d=\dim\supp (F)
$$
which in turn defines the notion of $H$-{\it Gieseker semistability}: $F$ is semistable
if for  $E\subset F$, then  $p_{H}(E)\leq p_{H}(F)$, where $p_{H}(-):=P_{H}(-)/\alpha_d(-)$
is the {\it reduced Hilbert polynomial} and where the order is lexicographical.
 
 For a fixed rational polynomial $P\in \QQ[t]$,  the Quot scheme $Q _{H,P}$  parametrizes equivalence classes of surjections
$q_F: V\otimes\cO_X(-m)
\twoheadrightarrow F$, where $P_{H}(F)=P$, where $V$ is a vector space isomorphic to $H^0(F(m))$, for some  large $m$, and where $q_F\sim q_{F'}$,  if $q_F=\alpha q_{F'}$, for some isomorphism $\alpha: F\cong F'$.
\vskip 0.5 cm

{\bf 2)}  Let $Q_0^{(H\text{-}Gies)_{ss}}$ be the  open subset of the quot-scheme $Q _{H,P}$ whose points
$[q_F: V\otimes\cO_X(-m)
\twoheadrightarrow F]$ are such that 

a) $F$ is $H$-Gieseker-semistable, and 

b) $V\cong H^0(F(m))$. 

The general linear group $\GL(V)$ acts naturally on $Q_0^{(H\text{-}Gies)_{ss}}$.
The Moduli space of  $H$--Gieseker semistable sheaves with Mukai vector  equal to $v$
is defined as 
\be\label{def-mod-quot}
M_H(v):=Q_0^{(H\text{-}Gies)_{ss}}/GL(V)
\ee
To make any sense out of this quotient one needs the GIT arsenal, and the first ingredient
is a $\GL(V)$-linearized line bundle $L$ on $Q _{H,P}$. This line bundle  is provided by the determinal line bundle 
\be\label{lb-quot}
L_H=\det(p_*(\F\otimes q^*\cO_S(\ell H))
\ee
where $\F$ is a universal family over $Q _{H,P}\times X$, while $p$ and $q$ are the 
natural projections, and $\ell$ is a fixed large integer. 
It now make sense to talk about 
$\GL(V)$-semistable points with respect to $L_H$, or briefly $L_H$-semistable points. 
It is only for these points that a GIT quotient makes sense.

\vskip 0.5 cm
{\bf 3)}  Thus the fundamental step in giving sense to the quotient (\ref {def-mod-quot})   is to show that points in ${Q_0^{(H\text{-}Gies)_{ss}}}$ are also
 $L_H$-GIT-semistable. But more is true:

\be\label{two-stab}
Q_0^{(H\text{-}Gies)_{ss}}=\left(\ov {Q_0^{(H\text{-}Gies)_{ss}}}\right)^{{(L_H\text{-}GIT})_{ss}}\subset Q^{{(L_H\text{-}GIT})_{ss}}
\ee

  To decide wether a point
\be\label{point-in-Q}
x=[q: V\otimes\cO_X(-m)
\twoheadrightarrow F]\in \ov {Q_0^{(H\text{-}Gies)_{ss}}}
\ee
is  $L_H$-GIT-semistable, one uses the Hilbert-Mumford criterion, and looks at one-parameter subgroups $\lambda:\GG_m\to\GL(V)$. For each such, one looks at
the limit $\lim_{t\to 0}\lambda(t)x=x_0$, exhibiting $\GG_m$ as a subgroup of $G_{x_0}:=\Stab(x_0)$. Thus, $\GG_m$ acts, via $\lambda$, on the fiber $(L_{H})_{x_0}$. Via the $\GL(V)$-linearization of $L_H$, one can write the $\GG_m$-action on that fiber  as 
$\lambda(t)v=t^rv$. Setting $r=\mu^{L_H}(x,\lambda)$,  the criterion says $x$ is $L_H$-semistable if and only if $\mu^{L_H}(x,\lambda)\geq 0$, for every $\lambda$.
To prove (\ref{two-stab}), one needs a bridge between $H$-Gieseker-semistability (computed in terms of Hilbert polynomials) and $L$-semistability. This  bridge is provided by expressing the invariant $\mu^L(x,\lambda)$ in terms of Hilbert-polynomials. It goes as follows. Fix a point $\x \in  \ov {Q_0^{(H\text{-}Gies)_{ss}}}$ as in (\ref{point-in-Q}).
A one-parameter group $\lambda$ determines
a decomposition of $V$ into {\it weight  subspaces}, and consequently a decreasing  filtration $\{V^w\}$  for $V$, and, via $q$, a decreasing filtration for   $F$: 
\be\label{filtr}
F^\bullet:=q(V^\bullet\otimes \cO_X(-m))):\qquad\qquad \cdots\subseteq F^{w+1}\subseteq F^{w}\subseteq\cdots\subseteq F
\ee
The associated graded pieces of $\{F^w\}$ are denoted by $F_w$.  The formula, bridging Gieseker stability with GIT stability, and  expressing 
$\mu^{L_H}(x,\lambda)$ in terms of Hilbert polynomials is the following  (recall the definition (\ref{lb-quot})):
\be\label{mu}
\mu^{L_H}(x,\lambda)=\sum_{w\in \ZZ} wP_{H, F_w}(\ell)\,.
\ee
From this, the equality (\ref{two-stab}) follows, after some highly non-trivial work.

  \begin{rem}{\rm A notational remark: the datum of a one-parameter subgroup through the point 
$x$ (as in (\ref{point-in-Q})),
determines a decreasing filtration  (\ref{filtr}).
Discarding set-equalities from that filtration we are left with a {\it finite, strictly decreasing} filtration:
$0\neq F^{w_p}\subset\cdots\subset F^{w_0}=F$, with $w_p>\cdots>w_0$ in $\ZZ$.
Following \cite{HL10}, we change  notation, and rewrite this datum as a {\it weighted filtration}
$(F^\bullet, w_\bullet)$, where now 
\be\label{filt2}
F^\bullet= \{ 0=F^{p+1} \subsetneq F^{p}\subsetneq\cdots\subsetneq F^{0}=F\}\,,
\qquad w_\bullet=\{w_p>\cdots>w_0\}\,,\qquad F_i=F^i/F^{i+1}
\ee
It should be noted that filtrations as in (\ref{filtr}) are not {\it arbitrary} filtrations of $F$.
For example, if condition {\bf 2)}, b) holds, then all the terms of a filtration of type 
(\ref{filtr}) are sheaves generated by their sections.}
\end{rem}
 Built  in the above construction of $M_H(v)$, are  three results. The fact that the scheme
$M_H(v)$ is  {\it proper}, that it is {\it separated}, and that its points 
 represent {\it $S$-equivalence classes} of sheaves.

\vskip 0.5 cm\
\subsection{Road map to moduli}\label{road}

We will now give a glimpse of the  path toward  the definition of  Bridgeland moduli spaces. 
This path was laid by the work of many authors (e.g.: 
Abramovich, Alper, Bayer, Hall, Halpern-Leistner, Heinloth, 
 Lahoz, Lieblich, Macr\`i, Perry,  Neur, Polishchuk,  Rydh, 
 and Stellari,
\cite{Lie06}, 
\cite{AP06},
\cite{P07},
\cite{Alp13}, 
\cite{Hein17},
 \cite{AHR20},
 \cite{Hal21},
 \cite{Hal22}, 
\cite{AHLH18},
\cite{BLM${}^+$19})
Starting from a moduli problem, the road envisioned by the above authors consist essentially in the four
steps below. We will often refer to Alper's Lecture notes \cite{Alp21} for basic definitions on stacks (e.g. for the definition of  $|\X|$, see Definition 2.2.6.  in loc.cit.), and we will use interchangeably the terms {\it algebraic stack} and {\it Artin stack}. Here are the four steps.

\vskip 0.3 cm

{\bf A)} Find an algebraic stack $\X$ equipped with and an appropriate theory substituting the classical GIT and which is related to the moduli problem at hand.
This will be the replacement of the Quot-scheme, while the "appropriate theory" will be a replacement of the $L_H$-GIT theory in item {\bf 2)} above.
A partial  substitute for the "big group", 
 $\GL(V)$ as a host of all the stabilizers of points in the Quot scheme will be the inertia stack $\I$ defined by the cartesian diagram
$$
\xymatrix{
\I\ar[r]\ar[d]_p&\X\ar[d]\\
\X\ar[r]^\Delta&\X\times\X\\
}
$$
It hosts all the stabilizers of points
$x\in |\X|$ as fiber of $p$.  These stabilizers are denoted by 
$\Aut_\X(x)$, or by $\Stab(x)$, or by $G_x$.

{\bf B)}  Show that there   is an open, quasi-compact algebraic subtack  $\X^{ss}\subset\X$
parametrizing semistable points for the above substitute of GIT. Again $''ss''$ replaces  $''ss_{L_H}''$,
in the classical picture.

{\bf C)}  Show that there is a {\it good moduli space} $\X^{ss}\to Y$ for the algebraic stack $\X^{ss}$, and that $Y$ is a complete, separated scheme, and furthermore that $Y$ parametrizes 
$S$-equivalence classes in the above "new GIT" sense (see Subsection \ref{exist-good})

{\bf D)}  Show that $Y$ {\it also} parametrizes $S$-equivalence classes in the {\it original moduli problem},
not only in the "new GIT" sense.
This is the step equivalent to the proof of (\ref{two-stab}) in the classical case.

We shall start by explaining the crucial term {\it good moduli space} appearing in item {\bf C)}.

\subsection{Good moduli spaces, and the slice \'etale}\label{good-mod-slice}

Alper \cite{Alp13}, following the footsteps of \cite{AOV08}, introduces  an amazingly  simple notion of  {\it  { good moduli space}}: 
\be\label{def-gms}
\text{\parbox{.88\textwidth}{{\it A quasi compact morphism $\phi: \Y\to Y$ from a (locally noetherian) Artin stack to an algebraic space is a good moduli space if:\it \,\,{\bf(1)} The pushforward functor on quasi-coherent sheaves is exact (in the locally noetherian case, coherent sheaves are sufficient).\,\,\it {\bf (2)} The induced morphism  $\cO_Y\to\phi_*\cO_\Y$ is  an isomorphism }}}
\ee
\vskip 0.3 cm
The simplest example of a good moduli space is provided by a linearly reductive group $G$ acting linearly on a projective scheme $X$. Then the natural morphism
from the stack quotient to the GIT quotient : $[X/G] \to X\dslash G$ is a good moduli space for $[X/G]$. In the next section we will see the central role played by the stack $[\AA^1/\GG_m]$, in the entire moduli theory (in our case $\GG_m=\CC^*$).

A good moduli space $\phi: \Y\to Y$ enjoys all the relevant properties of a GIT quotient.
For example (but look at the  complete list in  \cite[p. 2351]{Alp13} ):

i) $\phi$ is surjective and universally closed, so that $Y$ has the quotient topology
(via the induced map $|\Y|\to Y$). 

ii)  Given two geometric  points $x_1,x_2\in \Y$, then $\phi(x_1)=\phi(x_2)$ if and only if 
$\ov{\{x_1\}}\cap\ov{\{x_2\}}\neq \emptyset$, (clearly pointing towards $S$-equivalence).

iii) $\phi$ is universal for map to algebraic spaces.

Condition {\bf (1)} in the definition of good moduli space, namely the exactness of the functor
$\phi_*: Q\coh(\Y)\to Q\coh(Y)$ is also called {\it cohomological affiness}. In case $\Y$ has affine diagonal it amounts to the vanishing of $R^i\phi_*\F$, for quasi coherent sheaves $\F$,
and for $i>0$  \cite[ Remark 3.5, p.2357]{Alp13}. The proof of part (ii) of Lemma 4.9 \cite[p.2365]{Alp13} is enlightening:  it shows how  cohomological affiness
implies the crucial property ii) above.

Under  mild restrictive hypotheses, the local structure of an Artin stack 
is governed by the stack version of Luna's slice \'etale theorem established by Alper, Hall and Rydh in \cite{AHR20}. Here are their beautiful results 
in  \cite[Theorems 1.2,  and  2.9]{AHR20}

\be \label{sl-et}
\text{\parbox{.88\textwidth}{{\it Suppose that $\Y$ is a quasi-separated algebraic stack, locally of finite type
over an algebraically closed field $k$.  Let $x=[\spec k\to \Y]\in \Y(k)\subset|\Y|$ be a geometric point, and assume that the stabilizer $G_x$ is linearly reductive},(we will always work under these assumptions, and moreover we assume $k=\CC$).   {\it Then:}\\
{\bf(a)} {\it There exist an affine scheme $U=\spec A$, acted on by  $G_x$, a $k$-point $w\in U$, fixed by $G_x$, and an \'etale morphism
$f:( [U/G_x],w)\longrightarrow (\Y,x)$.}\\
{\bf(b)}  {\it Furthermore, if  $\Y\to Y$ is a good moduli space then there is a cartesian diagram}
$$
\xymatrix
{[U/G_x]\ar[r]^\rho\ar[d]&\Y\ar[d]^\pi\\
U\dslash G=\spec(A^{G_x})\ar[r]^{\quad\qquad\epsilon}&Y
}
$$
{\it  where the horizontal maps  are  \'etale,  so that $\epsilon$  is an \'etale neighbourhood of 
$\pi(x)$.}}}
\ee

We come now to a very important property of good moduli spaces.

 {\bf $S$-equivalence:}  
 \be \label{s-eq}
\text{\parbox{.88\textwidth}{{\it Let  $\Y$ be algebraic stack $\Y$ over $k=\CC$, with affine stabilizers, 
and of finite presentation,  (properties that always hold in the cases we are interested in), and  
suppose that $\Y\to Y$ is a good moduli space. Then $Y$  parametrizes $S$-equivalence classes
in $|\Y|$.} }}
\ee
As usual $S$ stands for Seshadri. In this abstract setting, this means, by definition,  that $Y$ is the quotient of $|\Y|$ by the smallest equivalence relation identifying  $f(1)$ with $f(0)$, for every filtration $f:\Theta\to \X$. The proof of this crucial property
 \cite[Corollary 5.5.3]{Hal22}), is an easy consequence of the above mentioned 
 \'etale slice  theorem.

In the back of one's own mind one should think that $\Y=\X^{ss}$, as in point {\bf C)}
of the preceding subsection.

\vskip 0.2 cm
\subsection{The "one-parameter group" probe}\label{probe}
\vskip 0.2 cm
In his comprehensive work \cite{Hal22}, Halpern-Leistner sets the foundations for a theory, in the realm of algebraic stacks, which substitutes the classical Geometric Invariant Theory for schemes.   While in Mumford's theory, one-parameter subgroups appear as tools to probe an a-priori established concept of stability, in Halpern-Leistner's view, one-parameter subgroups lay at the foundation of the new theory. Given an aglebraic stack $\X$, the correct notion of a one-parameter group through a point $x\in |\X|$ is a map
\be\label{1-par}
f: [\AA^1/\GG_m]\longrightarrow \X\,,\qquad f(1)=x
\ee

Halpern-Leistner uses the notation $\Theta= [\AA^1/\GG_m]$, 
and this choice 
brings with it the terminolgy: {\it$\Theta$-reductivity, $\Theta$-stratification}, which we will briefly explain below. 
The topological space $|\Theta|$ has two points:
the closed point $0$ and the open point $1$, which is  the image of $1\in\AA^1$, under the projection
$\AA^1\to \Theta$.

A map like (\ref{1-par}), carries, and is determined by, a number
of informations:

a)  the object  $x=f(1)\in\X(k)$,

b)  the closed point  $x_0=f(0)\in|\X|$,

c) the induced homomorphism $\gamma: \GG_m\to G_{x_0}=\Aut_\X(x_0)$.

Going back to point {\bf 3)} of the  classical construction, a map $f$ as in (\ref{1-par})
is also called a {\it filtration of the point $x=f(1)$}, while  $f(0)$  is  the associated graded object. It is crucial  to simultaneously think of (\ref{1-par}), both
as one-parameter group and as a filtration. Let us explain why.

Let $X$ be a scheme.
A quasi-coherent sheaf $E$ on $\Theta\times X$  can be described as 
a $\GG_m$-equivariant sheaf on $\mathbb A^1\times X$
under the $\GG_m$-action on $\mathbb A^1$. 
Look  at the case
 $X=\spec B$.  Such an equivariant sheaf corresponds to a graded module $M$ over the graded ring $B[t^{-1}]$  \cite[Sec. 1]{Ga19},
and here we follow Halpern-Leistner convention about the degree of $t$:
\be\label{grad-mod}
M=\underset {i\in \ZZ}\oplus M^it^{-i}\,,\qquad M^i=0\,,\quad i>>0\,,\quad M^j=M^{j-1}\,,\quad j<<0\,,
\ee
 where $M^i$ is a $B$-module .
Via the Rees construction \cite[Section 5.1]{SaSh18} , \cite[Section 4]{Hal13} the datum of 
the graded module $M$ is equivalent to the datum of
a diagram 
$$
M^\bullet:\qquad\cdots\longrightarrow M^i\overset{\alpha_i}\longrightarrow M^{i-1}\longrightarrow\cdots 
$$ 
of $B$-modules, 
where the map $M^i\to M^{i-1}$ corresponds to multiplication by $t^{-1}$: 
$$
\alpha_i(v)t^{i-1}:=t^{-1}\cdot vt^i
$$
This of course corresponds to a  diagram of coherent sheaves on $X$: 
$$
E^\bullet:\qquad\cdots\to E^i\to E^{i-1}\to\cdots 
$$
This  description can be upgraded from $\coh(\Theta\times X)$ to $\D(\Theta\times X)$, and for any scheme $X$. 
As we shall be mostly interested in perfect complexes, we may assume that multiplication by $t^{-1}$ is injective, so that in the above diagrams $M^\bullet$, and $E^\bullet$  are in fact  filtrations.
Thus, we will view  perfect complexes in $\D(\Theta\times X)$ as
filtered  objects, and we will often write:
\be\label{e-bull}
E^\bullet:\qquad0\neq E^{w_p} \subsetneq E^{w_{p-1}}\subsetneq\cdots\subsetneq E^{w_0}=E\,,\quad w_p>\cdots >w_0
\ee
where the inclusion $E^{w_i} \subsetneq E^{w_{i-1}}$ is given by multiplication by $t^{w_{i-1}-w_i}$.  
Consider  the case where $X=\{pt\}$, and denote by $\cO_\Theta(w)\in \coh(\Theta)$ the rank one, locally free sheaf corresponding to the $\CC[t]$-graded module $\CC[t]\cdot t^w$. Setting
 
\be\label{def-u}
u=[\cO_\Theta(1)]
\ee
 we have $K_0(\Theta)=\ZZ[u^\pm]$. Now look back at the graded $B[t^{-1}]$-module $M$ in 
(\ref{grad-mod}). A simple computation in the $\GG_m$-equivariant  Grothedieck  group gives

$$
[M]= \left[\underset {i\in \ZZ}\oplus M^it^{-i}\right]=\left[\sum_i \CC[t^{-1}]\cdot t^{-i}\otimes[M^i-M^{i+1}])\right]=
\left[\sum_i u^{-i}\otimes\operatorname{gr}_i(M^\bullet)\right]
\in K_0(B[t])^{\CC^*}
$$
and in general
\be\label{usef-form-k}
[E]= \left[\underset {i\in \ZZ}\oplus E^it^{-i}\right]=
\left[\sum_i u^{-i}\otimes\operatorname{gr}_i(E^\bullet)\right]\in K_0(\Theta\times X)
\ee

\vskip 0.2 cm
Let us now consider a moduli stack $\M$ parametrizing perfect complexes  of a given type in $\D(X)$, and consider a morphism
\be\label{ftheta}
f: \Theta\longrightarrow \M
\ee

Given a universal object
$\E\,\,\text{over} \,\,\M\times X$,
we may look at 
$$
\E_f:=(f\times 1)^*\E\in \D(\Theta\times X)
$$
Suppose that, under  the above identification, we have $\E_f=E^\bullet$,  then
$E=f(1)$ and ${gr}E^\bullet=f(0)$
where $E$ and $E^\bullet$ are as in (\ref{e-bull})
 \cite [Lemma 6.3.1, with $n=1$]{Hal22}, \cite[Corollary 7.12] {AHLH18}.  From (\ref{usef-form-k})
we get the following useful formula  in $K$-theory:
\be\label{class-e}
[\E_f]=
\sum_ju^{-w_j}[\operatorname{gr_j}E^\bullet]\in K_0(\Theta\times \M)
\ee

\subsection{The notion of $\Theta$-stability for a stack}\label{GIT-stack}
In the classical construction, the {\it weight} of a point, with respect to a one-parameter subgroup,
is   defined in terms of a linearization of a line bundle over the Quot-scheme
(see point {\bf 3)} above).  Halpern-Leistner introduces an abstract notion of weight
as a numerical invariant $\mu$ assigning, to each filtration $f$, a value $\mu(f)$
in a totally ordered set $\Gamma$, equipped with a marked point $0\in \Gamma$
(e.g. $\ZZ$) \cite[Section 4.1, p.91]{Hal22}.

This function should satisfy a number of quite natural requirements \cite[Definition 0.0.3 and Definition 4.16]{Hal22}. A point $x\in |\X|$ will be called {\it $\Theta$-unstable} or simply {\it unstable} if there exists 
a filtration $f$, with $f(1)=x$ and $\mu(f)>0$. Points that are not unstable are called {\it $\Theta$-semistable} or simply {\it semistable}, and they are the support of a substack $\X^{ss_\mu}\subset \X$. One of the main tasks is 
to decide whether the substack $\X^{ss_\mu}$ is algebraic and whether it admits a good moduli space. 

The inspiring ideas for the definition of the numerical function $\mu$ originate in the groundbreaking work  by Kempf, 
\cite{Ke78}, (and especially \cite{Ke18}!), and also in the  works by Hessenlink, Ness and Mumford, \cite{Hes79}, \cite{NM84}. As in Kempf's theory, this numerical function brings with it the notion  of {\it Harder-Narasimhan filtration of a point $x\in |\X|$}, as a filtration
$f$ with 
$$
\mu(f)=M^\mu(x):=\sup\{\mu(f')\,\,|\,\,f'\,\,\text {a filtration s.t.}\,\,f'(1)=x\in|\X|\}.
$$
In Kempf's theory this filtration corresponds to  the "fastest" one-parameter group through $x$. It is of course crucial to insure uniqueness of a  Harder-Narasimhan filtration. Halpern-Leistner
 identifies the correct property that $\X$ should satisfy to ensure this uniquiness: he calls it {\it $\Theta$-reductivity}. In simple terms it can be stated as follows: given a discrete valuation domain $R$, and  a morphism $T=\spec R\to \X$, then any filtration on the generic point of 
$T$ extends to a family of filtrations on all of $T$ \cite  [Definition 5.11, and Theorem 5.1.9]{Hal22}.
To better explain this concept one should organize all possible filtrations for a stack $\X$ as  points of a new stack
$\operatorname {Filt}(\X)$ \cite [Definition 1.1.11 with $n=1$] {Hal22} and look at the evaluation morphism $\operatorname{ev}_1: \operatorname {Filt}(\X)\to \X$, defined by $\operatorname{ev}_1(f)=f(1)$. Then the condition of 
 $\Theta$-reductivity 
 for $\X$ is nothing but   the evaluative criterion for properness of the morphism $\operatorname{ev}_1$
 \cite[Definition 5.1.1]{Hal22} .

Via the totally ordered set $\Gamma$ one defines a {\it set-theoretical} stratification:
$$
|\X|_{\leq c}=\{x\in |\X|\,\,\,\,|\,\,\,\,M^\mu(x)\leq c\in \Gamma\}\subset |\X|
$$

When this stratification is the support of a bona fide stratification by open substacks $\X_{\leq c}\subset \X$, then we are in the presence of what Halpern-Leistner calls a {\it {\rm{(}}weak{\rm{)}} $\Theta$-stratification of $\X$}, 
\cite [Definition 2.2.1, Theorem 2.2.2, Definition 4.1.1]{Hal22}.  
This happens under two conditions: $\Theta$-reductivity and {\it "boundedness"}
(see  \cite[Theorem A, item (B)]{Hal22}. 
Of course the semistable stratum in this stratification  is given by $\X^{ss_\mu}=\X_{\leq 0}$.

\subsection{ Geometrical numerical functions}\label{num-funct}

The main examples of  numerical functions, for which one should test the above mentioned properties,
are given as follows. Recal  that $H^*(\Theta, \QQ)=H^*_{\GG_m}(\AA^1)\cong\QQ[u]$ (see (\ref{def-u})).
Now start from two  classes 
\be\label{classes}
\ell\in H^2(\X;\QQ)\,,\text{ and}\,\, b\in  H^4(\X;\QQ)
\ee

 Assume that $b$ is {\it positive} in the sense that for every filtration $f$, as in (\ref{1-par}), $f^*b=au^2$ with $a>0$. Then set
$$
\mu(f)=\frac{f^*\ell}{\sqrt{f^*b}}\in \QQ
$$
Because of the positivity of the denominator,   the numerator alone decides semistability (see e.g. \cite[Definition 6.13]{AHLH18}. Namely  a point $x\in \X$ is $\Theta$-unstable if there is a filtration $f:\Theta\to \X$, with $f(1)=x$, and $f^*(\ell)>0$. In fact we will often write $\X^{ss_\ell}$ instead of $\X^{ss_\mu}$.
On the other hand, the above quotient is invariant under  base change of the type
 $\Theta\overset{(\bullet)^n}\longrightarrow\Theta$, (this is  relevant, with respect to the \'etale topology of $\X$,  in view of  the Luna-type theorem discussed in Subsection \ref{good-mod-slice}).

\vskip 0.2 cm 

To give these very general notions a firmer ground, at least in one's own intuition, we have to wait for Subsection \ref{mod-stack}, where we will decrease the level of abstraction to describe the stacks that are immediately relevant to the construction of
Bridgeland moduli spaces.

The next task is to see under which conditions the stack $\X^{ss_\ell}$ admits a good moduli space.
 
\subsection {Existence of good moduli spaces. }\label{exist-good}
Alper, Halpern-Leistner, and Heinloth \cite {AHLH18} find necessary and sufficient conditions for an algebraic stack
to admit a good moduli space. The algebraic stacks $\Y$  we are interested in, are defined over $k=\CC$, 
are of finite type, and with affine diagonal. For these algebraic stacks their criterion is quite simple \cite [Remark 5.5, p. 29] {AHLH18}: 

\be\label{gms}
\text{\parbox{.85\textwidth}{\it $\Y$ admits a good moduli space $\pi: \Y\to Y$,   if and only if $\Y$ is $\Theta$-reductive, and $S$-complete.
}}
\ee
While $\Theta$-reductivity is a condition that filtrations extend under specialization, the condition of {\it $S$-completeness}
(here again, $S$ stands for Seshadri) formalizes the idea of semistable reduction, and again has the flavor of an evaluative criterion:
it says that, given a discrete valuation domain $R$, and  a morphism $T=\spec R\to \X$, then any  two families (of filtrations)  on $T$ that coincide over  the generic point differ by an {\it elementary modification}
$T$, \cite [Section 2B]{Hein17}, \cite[Definition 3.35]{AHLH18},  \cite[Definition 5.5.4]{Hal22} .
While $\Theta$-reductivity is tested by 
families (of filtrations)  of the form $\Theta\times\spec R=[\spec(R[x])/\GG_m]\to \X$, where the action is given by $x\mapsto t\cdot x$, the condition of $S$-competeness
is tested by families ${\operatorname{\ov{ST}_R}}:=[\spec R[x,y]/(xy-\pi)/\GG_m]\to \X$, where the action of $\GG_m$ is given by $t\cdot x=tx\,,\,t\cdot y=t^{-1}x$, and where $\pi$ is a local parameter for $R$. This is well explained in \cite[Section 2B]{Hein17}, (see also \cite{Lan75}).
\vskip 0.3 cm
One of the fundamental results  in \cite{AHLH18} is Corollary 6.18.
\vskip 0.3 cm

\be\label{ss-loc}
\text{\parbox{.85\textwidth}{\it Let $\X$ be an algebraic stack of finite type with affine diagonal. Assume that $\X$ admits a good moduli space $\pi: \X\to X$. Let $\X^{ss_\ell}\subset \X$ be the $\Theta$-semistable locus with respect to some class $\ell\in H^2(\X,\QQ)$. Assume that $\X^{ss_\ell}$ is quasi-compact and open in $\X$, then $\X$ admits a good moduli space, which is separated. This moduli space is proper if 
$\X^{ss_\ell}$ satisfies the existence part of the evaluative criterion for properness
and if the $\Theta$-stratification is well ordered.
}}
\ee

(the last condition will always be satisfied in the case of Bridgeland moduli spaces).
\vskip 0.3 cm

\subsection {Fixing a stability condition}\label{stab-cond}  As usual we denote by  $\D$ either the bounded derived 
of a K3 surface or the Kuznetsov component of a smooth cubic fourfold in $\PP^5$.
When no confusion will arise, we will denote with the same letter $X$ either a K3 surface or a
smooth cubic fourfold in $\PP^5$.

We  fix on $\D$ a numerical Bridgeland stability condition $\sigma$
with heart $\A\subset \D$,  and stability function $Z$:
\be\label{stab-cond}
\sigma=(\A, Z)\,,\qquad\A\subset \D.
\ee
We will continue to follow the notation in 
\cite{AHLH18}, \cite{Hal22}, and occasionally we will refer to
 \cite {\blm}, for the case $S=\spec k$.

\vskip 0.3 cm

\subsection {Moduli of complexes}\label{mod-cmpx}
\vskip 0.2 cm
If $X$ is a scheme, then  $\D^b_{pug}(X)\subset D^b(X)$  denotes the subcategory of 
{\it perfect, universally gluable}  objects  \cite[ Def 2.1.1, Def. 2.1.8]{Lie06} and
\cite[Def.  8.5] {\blm}. 
We consider the stack $\M_{pug}(\D)$, whose groupoids are given by
$$
\M_{pug}(\D)(T)=\{E\in\D^b_{pug}(X\times T)\,|\,E_t\in\D\}
$$
where $E_t$ is the derived restriction of $E$ to $X\times\{t\}$:
$$
E_t=\text{\bf L}\iota^*_t E\,,\qquad \iota_t:X=X\times\{t\}\to X\times T
$$
  Lieblich   \cite{Lie06},  proves that this is an Artin stack
locally of finite presentation \cite[Section 9]{\blm}.

\vskip 0.2 cm

\subsection {  The algebraic stack   $\M_v(\A)$}\label{mod-stack}

Let $\A$ be the heart of the Bridgeland stability condition in (\ref{stab-cond}). Consider the stack $\M(\A)\subset \M_{pug}(\D)$, whose groupoids are defined as follows. For every scheme of finite type $T$:
$$
\M(\A)(T)=\{E\in \M_{pug}(T)\,\,|\,\,E_t\in \A\}
$$
This definition was introduced by Abramovich and Polishchuk 
\cite[Definition 3.3.1]{AP06} , and also \cite[Definition 6.2.1]{Hal22},
 in a much broader generality, where $\A$ can be  the noetherian  heart of a
non-degenerate $t$-structure in the derived category of a smooth projective variety $X$.

{\it{\bf Remark: } We are departing slightly from Halpern-Leistner's notation \cite{Hal22},
where the symbol $\M$ is used instead of $\M(\A)$.}

In order to prove many important properties of this moduli stack, one needs to use a number of different but, a posteriori, equivalent incarnations of the stack
$\M(\A)$. All of them are thoroughly discussed in Sections 6.1,6.2 of \cite{Hal22}.
To get good results for $\M(\A)$, the heart $\A$ should satisfy noetherianity, but above all,
the so-called {\it generic flatness condition}. This last property was first envisaged by Artin and Zhang \cite[Section C5]{AZ01}, and both properties are pivotal in proving that:
 \cite [Proposition 6.2.6]{Hal22} and \cite[Theorem 3.20]{Tod08} for the case of Bridgeland stability.

\be\label{gms}
\text{\parbox{.85\textwidth}{\it Assuming noetherianity and
generic flatness,  $\M(A)$ is an open substack of  $\M_{pug}(\D)$ and hence an algebraic stack of finite type, with affine diagonal}}.
\ee
The fact that noetherianity holds for the heart of a numerical stability condition is due to Bridgeland \cite{Br07}. The fact that generic flatness holds  for the heart of a Bridgeland stability condition is due to Toda \cite[Lemma 4.7, Proposition 3.18 in the case of a K3 surface, and only for the main component, and the case of a Kuznetsov component is similar]{Tod08}.

The fundamental result  is \cite[Section 7.2]{AHLH18}:

\be\label{gms}
\text{\parbox{.85\textwidth}{\it Let $\A$ be as in (\ref{stab-cond}). Then $\M(\A)$ is $\Theta$-reductive and $S$-complete, and hence admits a good moduli space}}.
\ee

{\bf Remark:} In \cite{AHLH18} the properties of $\Theta$-reductivity, and $S$-completeness are proved  in Lemma 7.15 and Lemma 7.16, respectively, in grater generality, that  is for more general hearts. The result is achieved by using  another incarnation of the moduli-stack which they call $\M_\A$ \cite[Definition 7.8]{AHLH18}.
It should be noted that in that definition, a  more restrictive assumption is imposed on $\A$: that it should be {\it locally noetherian} (see e.g. \cite{Ro69}). This condition  is automatically satisfied by replacing a given  heart $\A\subset \D$ with $\A_{qc}=\operatorname{Ind}(\A)$. However, in     \cite[Example 7.20]{AHLH18}, the authors point out that, due to the important   \cite[Proposition 3.3.7]{P07}, their stack $\M_{\A_{qc}}$
is equivalent to Abramovich-Polishchuk's stack $\M(\A)$.
\vskip 0.3 cm
Let finally $v\in H^*_{alg}(X,\QQ)$ be a Mukai vector and let $\M_v(\A)$ be the stack defined by the groupoid:
\be\label{mva}
\M_v(\A)(T)=\{E\in \M(\A)(T)\,\,|\,\,\,\,v(E_t)=v\}
\ee
Then
\be\label{gmsv}
\text{\parbox{.85\textwidth}{\it Let $\A$ be as in (\ref{stab-cond}). Then $\M_v(\A)$ is $\Theta$-reductive and $S$-complete, and hence admits a good moduli space}}.
\ee

The algebraic stack $\M_v(\A)$  plays the role of the Quot-scheme 
in the classical picture.
In view of (\ref{ss-loc}), in order to introduce a technique that substitutes the non-available GIT in this algebraic stack, we  need an appropriate class
$\ell\in H^2(\M(\A),\QQ)$. Having done this we will be able to talk about the semistable locus 
\be\label{ssell}
\M^{ss_\ell}_v(\A)\subset \M_v(\A)
\ee
with respect to the $\Theta$-stratification induced by $\ell$, and by virtue of 
(\ref{ss-loc}), we will be able to conclude that $\M^{ss_\ell}_v(\A)$ has a good moduli space.
The stack $\M^{ss_\ell}_v(\A)$ turns out to be  exactly the replacement of the scheme $Q^{{(L_H\text{-}GIT})_{ss}}$, appearing in (\ref{two-stab}), Subsection \ref{classic},   of the classical picture. The cohomology class $\ell$, we are looking for, 
was introduced in a slightly different context by Bayer and Macr\`i in \cite{BM-pr14}.
Let us recall this construction.

\subsection {Bridgeland moduli stack $\M_\sigma(v,\D)$ and Bayer-Macr\`i's class}\label{bm-lb}
\label{BM-class}
Let $\sigma$ be as in \ref{stab-cond}.
Bayer and Macr\`i consider the stack  $\M_\sigma(v, \D)$ of {\it flat families of $\sigma$-semistable objects,
in $\D$, 
of class $v$}, that is the stack whose groupoids are given by
$$
\M_\sigma(v,\D)(T)=\{ E\in \M_{pug}(T)\,|\, E_t\,\,\text{is}\,\, \sigma\text{-semistable},\text{with}\,\, v(E_t)=v\,,\,\,\text{and fixed phase}\,\,\phi\}
$$
where $E_t$ is, as usual, the derived restriction of $E$ to $X\times\{t\}$. Following \cite{\blm}, Definition 21.11, we will omit the phase in the notation. 
The stack
$\M_\sigma(v,\D)$ is  algebraic  and it is  an open substack of $\M_{pug}(X)$ (see, e.g. \cite{\blm}, Lemma 21.12). As we may assume that $\phi\in(0,1]$, the stack $\M_\sigma(v,\D)$
is an open substack of $\M_v(A)$
 The stack $\M_\sigma(v,\D)$ is playing the role played by 
$Q_0^{(H\text{-}Gies)_{ss}}$ (defined in item {\bf 2)} Subsection \ref{classic})
in the classical picture.
In Subsection \ref{mod-stack-slope}  we will prove the fundamental equality
\be\label{git-slope}
\M_\sigma(v,\D)=\M^{ss_\ell}_v(\A)\qquad\qquad\subset \quad \M_v(\A)
\ee
which is the exact analogue of (\ref{two-stab}). Bayer and Macr\`i's construction goes as follows.
Set for simplicity
 $\M=\M_\sigma(v,\D)$. Cosider a universal object
 $\E$ over $\M\times X$ and set 
 $$
p_1: \M\times X\to\M\,,\quad p_2: \M\times X\to X	
$$
Recall (\ref{z-omega2}), and consider the cohomology class:
\be\label{ell}
\ell_\sigma=\ell:=\ch_1\left({p_1}_*\left(\E\otimes p_2^* \,\Im m\left(\frac{-\omega_\sigma}{Z(v)}\right)\right)\right)\in H^2(\M,\QQ)
\ee
(here we are modifying $\sigma$ so that it is defined over $\QQ[i]$).
This is the first Chern class of the line bundle $\frak{L}_\sigma$ introduced  in \cite{BM-pr14}. The way is now paved to find the class $\ell\in  H^2(\M_v(\A),\QQ)$: we just copy the above construction for the case 
of $\M_v(\A)$.

\subsection {$\Theta$-stability for $\M_v(\A)$}\label{mod-stack-git} Look at formula  (\ref{ell}),
substitute
  the stack $\M_\sigma(v,\D)$ and its universal family with the stack 
$\M_v(\A)$, and its universal family (which we shall call with the same name). This defines a class $\ell\in H^*(\M_v(\A), \QQ)$.
At this point a substitute for the unavailable  GIT on $\M_v(\A)$ presents itself under the guise of $\Theta$-stratification,
based on the class $\ell$. Now
(\ref{ss-loc}) tells us that 
\be\label{gms-mva}
\text{\parbox{.85\textwidth}{ $\M_v(\A)^{ss_\ell}$ {\it has a good moduli space} }}.
\ee
Let's see how to test $\Theta$-semistability  of a point $[E]\in |\M_v(\A)|$ by using the class $\ell$.
First of all, to simplify notation, we can always normalize the class $\ell$ in (\ref{ell})
by setting
\be\label{normaliz}
Z(v)=i
\ee
We can do this by acting with the group $\wt\GL^+(2, \RR)$ (see Section \ref{notation}).

$$
\text{\parbox{.85\textwidth}{\it    By definition a point $[E]\in |\M_v(\A)|$ belongs to $|\M_v(\A)^{ss_\ell}| $ if, for every filtration 
$f:\Theta\to \M_v(\A)$, with $f(1)=[E]$, we have $f^*\ell\geq 0$.
     }}.
$$

We must compute $f^*\ell\in K_0( \Theta\times X)$, and for this we follow Halpern-Leistner's \cite[Lemma 6.4.8]{Hal22}. Consider the 
diagram
$$
\xymatrix{\Theta\times X\ar[r]^{f\times 1}\ar[d]_\pi&\M\times X\ar[d]^{p_1}\,,\\
\Theta\ar[r]^f&\M
}\qquad f(1)=[E]
$$
Using (\ref{class-e}), and letting $q_2: \Theta\times X\to X$ be the projection,
we get (as usual, all functors should be interpreted in the derived sense)
$$
\aligned
f^*\ell&=f^*\left[\ch_1\left({p_1}_*\left(\E\otimes p_2^* \,\Im m\left(\frac{-\omega_\sigma}{i}\right)\right)\right)\right]\\
&=\ch_1\left[  {\pi}_*(f\times 1)^*\left(\E\otimes p_2^* \,\R e (\omega_\sigma)\right)\right]\\
&=\ch_1\left[  \left({\pi}_*\left(\E_f\otimes q_2^* \,\R e\left(\omega_\sigma\right)\right)\right)  \right]\\
&=\ch_1\left[  {\pi}_*\left(\left(\sum_ju^{-w_j}\operatorname{gr_j}E^\bullet\right)\otimes q_2^* \,\R e\left(\omega_\sigma\right)\right)  \right]\\
&=\ch_1\left[  \sum_ju^{-w_j}\R e\left({\pi}_*\left(\operatorname{gr_j}E^\bullet\otimes  \,\omega_\sigma\right)\right) \right]\\
&=\ch_1\left[  \sum_ju^{-w_j} \,\R e \left(\chi\left(\operatorname{gr_j}E^\bullet\otimes\omega_\sigma\right)\right)  \right]\\
&=\ch_1\left[  \sum_ju^{-w_j} \,\R e \left(\left<\operatorname{gr_j}E^\bullet\,,\,\omega_\sigma\right>\right)  \right]\\
&=\ch_1\left[  \sum_ju^{-w_j} \,\R e \left(Z(\operatorname{gr_j}E^\bullet\right)  \right]\\
&=  -\sum_j {w_j} \,\R e \left(Z(\operatorname{gr_j}E^\bullet)\right)  \\
\endaligned
$$

Writing
$$
\frac{Z(-)}{Z(v)}=\frac{Z(-)}{i}=\operatorname{rank}(-)+i\deg(-)=-i\R e(Z(-))+\Im m(Z(-))
$$
we finally get 
$$
f^*\ell=-\sum_jw_j\R e (Z(E_{w_j}))=\sum_jw_j\deg(E_{w_j})
$$
where we set $E_{w_j}=E^{w_j}/E^{w_{j+1}}$, and where, in the last expression, we allow some of 
the terms $E_j$ to be equal to $0$.

\subsection {Bridgeland stability vs $\Theta$-stability, and the equality: $\M_\sigma(v,\D)=\M^{ss_\ell}_v(\A)$}\label{mod-stack-slope}
{}

As we already mentioned, the equality we want to prove is the exact analogue 
of equality (\ref{two-stab}) in the classical picture. Unlike the classical case, this equality
is quite  straightforward in the stack setting. It is proved in \cite[Theorem 4.6.11]{Hal22}, and more explicitely in \cite[Lemma 7.22]{AHLH18}. This is what that  Lemma says:

\begin{lem}\label{bridg-git}
 A point $x=[E]\in |\M_v(\A)|$ is $\sigma$-unstable if and only if is $\Theta$-unstable with respect to $\ell$. That is: $\M_\sigma(v,\D)=\M^{ss_\ell}_v(\A)$.
\end{lem}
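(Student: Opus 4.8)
\emph{The plan} is to read the equivalence off directly from the formula for $f^*\ell$ computed in Subsection \ref{mod-stack-git}, rewriting it by summation by parts so that $\Theta$-stability becomes a statement about the degrees (equivalently, phases) of the honest subobjects of $E$ in the heart $\A$. Throughout I use the normalization (\ref{normaliz}), $Z(v)=i$, so that $\deg(E)=0$ and $\phi(E)=\phi(v)=\tfrac12$; recall also that for $0\neq A\in\A$ one has $\deg(A)>0$ if and only if $\phi(A)>\tfrac12$, since $Z(A)\in\RR_{>0}e^{i\pi\phi(A)}$ and $\deg(A)=-\R e(Z(A))$.

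First I would recall, from Subsection \ref{probe}, that a filtration $f\colon\Theta\to\M_v(\A)$ is the same datum as a finite filtration of $E=f(1)$ by subobjects of the heart,
\[
0=E^{w_{p+1}}\subsetneq E^{w_p}\subsetneq\cdots\subsetneq E^{w_0}=E,\qquad w_p>\cdots>w_0,
\]
as in (\ref{e-bull}), with associated graded $f(0)=\bigoplus_j E_{w_j}$, where $E_{w_j}=E^{w_j}/E^{w_{j+1}}\in\A$. By the computation of Subsection \ref{mod-stack-git} we have $f^*\ell=\sum_j w_j\deg(E_{w_j})$. Then I would perform summation by parts: writing $S_j:=\deg(E^{w_j})=\sum_{k\ge j}\deg(E_{w_k})$, so that $S_0=\deg(E)=0$ and $S_{p+1}=0$, and using $\deg(E_{w_j})=S_j-S_{j+1}$, the boundary terms cancel and one obtains
\[
f^*\ell=\sum_{j=1}^{p}(w_j-w_{j-1})\,\deg\!\big(E^{w_j}\big),
\]
a combination, with strictly positive coefficients $w_j-w_{j-1}>0$, of the degrees of the genuine nonzero proper subobjects $E^{w_j}$, $1\le j\le p$.

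From this identity both implications follow at once. If $E$ is $\Theta$-unstable, choose a filtration $f$ with $f^*\ell>0$; then $\deg(E^{w_j})>0$, hence $\phi(E^{w_j})>\tfrac12=\phi(E)$, for at least one $j$, so $E$ admits a subobject of strictly larger phase and is $\sigma$-unstable. Conversely, if $E$ is $\sigma$-unstable, pick a subobject $0\neq A\subsetneq E$ in $\A$ with $\phi(A)>\phi(E)$, hence $\deg(A)>0$, and feed the two-step filtration $0\subset A\subset E$ (with any weights $w_1>w_0$) into the same formula to get $f^*\ell=(w_1-w_0)\deg(A)>0$, so $E$ is $\Theta$-unstable. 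Taking complements of the unstable loci yields $\M_\sigma(v,\D)=\M^{ss_\ell}_v(\A)$.

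The substantive calculation has already been carried out in Subsection \ref{mod-stack-git}, so the only point needing care is the dictionary of Subsection \ref{probe}: that maps $\Theta\to\M_v(\A)$ correspond exactly to filtrations of $E$ by subobjects of the heart (with $f(1)=E$ of class $v$ and $f(0)$ the associated graded), and, for the converse direction, that the two-step datum $0\subset A\subset E$ with $A,\,E/A\in\A$ does define such a $\Theta$-point. This identification is the content of \cite[Lemma 6.3.1]{Hal22} and \cite[Corollary 7.12]{AHLH18}; granting it, the proof reduces to the elementary summation-by-parts bookkeeping above.
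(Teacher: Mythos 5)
Your proof is correct and takes essentially the same route as the paper's: both work under the normalization $Z(v)=i$, rewrite the formula $f^*\ell=\sum_j w_j\deg(E_{w_j})$ by Abel summation into a positive combination $\sum_j (w_j-w_{j-1})\deg(E^{w_j})$ of degrees of the actual subobjects (so that $f^*\ell>0$ forces some $E^{w_j}$ to have $\R e(Z(E^{w_j}))<0$, i.e.\ phase larger than $\phi(E)$), and both use a two-step filtration $0\subset A\subset E$ built from a destabilizing subobject for the other implication. The only difference is organizational: you prove the summation-by-parts identity once and read off both directions from it, while the paper handles the easy direction directly and invokes the identity only for the converse.
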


The proof is straightforward: we are always working under the assumption that $Z(v)=i$, and we first suppose that $F\subset E$ is a $\sigma$-destabilizing subobject.
Set $E_1:=F$, $E_0:=E$,   $E_i=0$, $i>1$,  $E_i=E$, $i<0$,      so that $w_1=1>w_0=0$, and $f^*\ell=-\R e (Z(F))>\R e (Z(E))=0$, proving the $\Theta$-unstability of $E$. Viceversa,
suppose $f^*\ell=-\sum_jw_j\R e (Z(E^{w_j}/E^{w_{j+1}}))>0$,   for some filtration
$E^\bullet$, with $E^{w_0}=E$, and $w_p>w_{p-1}>\cdots>w_0$. Now
$$
-\sum_jw_j\R e (Z(E^{w_j}/E^{w_{j+1}}))=-(w_1-w_0)\R e (Z(E^{w_1}))-\cdots-(w_p-w_{p-1})\R e (Z(E^{w_p}))>0
$$
so that $\R e (Z(E^{w_i}))<0$, for some $i$, proving the $\sigma$-unstabilty of $E$. Q.E.D.
\vskip 0.3 cm
From (\ref{gms-mva}) one concludes:

\begin{theorem}\label{gms-br}
$\M_\sigma(v,\D)$ has a good moduli space $M_\sigma(v,\D)$:
$$
\pi: \M_\sigma(v,\D)\to M_\sigma(v,\D),
$$
The algebraic space  $M_\sigma(v,\D)$ is called the Bridgeland moduli space of $\sigma$-semistable objects of given phase (and given Mukai vector $v$).
Moreover,
$M_\sigma(v,\D)$ is complete.
\end{theorem}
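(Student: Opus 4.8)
The plan is to reduce everything to the identification $\M_\sigma(v,\D)=\M^{ss_\ell}_v(\A)$ of Lemma \ref{bridg-git} and then to feed the ambient stack $\X=\M_v(\A)$, together with the class $\ell$, into the criterion (\ref{ss-loc}) of \cite{AHLH18}. First I would check that the hypotheses of (\ref{ss-loc}) are in place: $\M_v(\A)$ is of finite type with affine diagonal, being the open substack of $\M(\A)$ cut out by fixing the Mukai vector $v$, and it admits a good moduli space by (\ref{gmsv}); moreover $\M^{ss_\ell}_v(\A)=\M_\sigma(v,\D)$ is an open substack of $\M_v(\A)$ (recorded in Subsection \ref{bm-lb}) and is quasi-compact by boundedness of the family of $\sigma$-semistable objects of fixed class $v$. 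Granting this, (\ref{ss-loc}) --- equivalently the already stated (\ref{gms-mva}) --- yields the good moduli space $\pi:\M_\sigma(v,\D)\to M_\sigma(v,\D)$ together with its separatedness.

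For completeness I would again invoke (\ref{ss-loc}): the separated space $M_\sigma(v,\D)$ is proper as soon as the $\Theta$-stratification induced by $\ell$ is well ordered and $\M^{ss_\ell}_v(\A)$ satisfies the existence part of the evaluative criterion for properness. The well-ordering holds automatically here, as noted just after (\ref{ss-loc}): the possible values of the Harder--Narasimhan function $M^\mu$ are governed by the finitely many phases and degrees of the HN factors of a class of bounded norm, hence form a discrete, well-ordered subset of $\QQ$. Thus the entire weight of the statement rests on the existence part of the evaluative criterion.

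Concretely, this existence part is \emph{semistable reduction}: for a discrete valuation ring $R$ with fraction field $K$, any $K$-family of $\sigma$-semistable objects of class $v$ and fixed phase must extend, after a finite extension of $R$, to an $R$-family in $\M_\sigma(v,\D)$. I would prove it in two stages. First extend the given $K$-object to a flat family of perfect, universally gluable complexes over $\spec R$ --- its boundedness makes this possible --- and then, using the noetherianity of $\A$ together with the $\Theta$-reductivity and $S$-completeness of $\M_v(\A)$ from (\ref{gmsv}), perform Langton-type elementary modifications so that the central fibre lands back in the heart $\A$ with Mukai vector $v$. Second, if that central fibre is $\sigma$-unstable, replace it by the elementary modification along its $\ell$-maximizing filtration and iterate. \emph{The hard part is the termination of this second algorithm:} unlike the classical Gieseker situation one modifies complexes rather than sheaves, so the monotone functional forcing termination must be manufactured from the central charge $Z$ and the support property rather than from Hilbert polynomials. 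This is exactly where the geometry of the $2$-Calabi--Yau category $\D$ enters, and it is the step I would import from the constructions of Bridgeland moduli spaces in \cite{BM-pr14}, \cite{AHLH18} and \cite{Hal22} for the K3 case and from the corresponding analysis for the Kuznetsov case; alternatively one may cite the completeness statements established there and transport them through Lemma \ref{bridg-git}.
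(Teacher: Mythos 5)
Your proposal is correct and follows essentially the same route as the paper: both reduce the statement via Lemma \ref{bridg-git} (the equality $\M_\sigma(v,\D)=\M^{ss_\ell}_v(\A)$) to the AHLH18 machinery, invoking (\ref{ss-loc}) for the existence and separatedness of the good moduli space and citing \cite{AHLH18} for properness. Your sketch of the Langton-type semistable reduction is an unpacking of what those cited results contain (and you correctly fall back on citing them for the termination step), whereas the paper simply quotes AHLH18's Corollary 6.12 and its Example 7.20 identifying $\M_{\A_{qc}}$ with the Abramovich--Polishchuk stack $\M(\A)$.
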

This statement is proved in \cite{AHLH18},  in various steps. First it is proved for their stack $\M_\A$  (go back to the Remark just after (\ref{gms})), and therefore, by their Example 7.20, for the Abramovich-Polishchuk's stack $\M(\A)$. By Corollary 6.12 on gets the properness of
$M^{ss_\ell}_v(A)$; the one of $M_\sigma(v,\D)$ follows from (\ref{git-slope}).

\vskip0.3cm
\section{Kuranishi families}\label{Kuranishi-f}\label{kura-fa}

\subsection{Bases of Kuranishi families}\label{base-kura}

We keep the notation introduced in   the preceding section.
Let $[F]\in M_\sigma(v,\D)$ be a point corresponding to a $\sigma$-semistable object $F$ in the heart $\A$ of a numerical stability condition $\sigma=(\A, Z)\in\Stab^\dagger(\D)$.  As in (\ref{gms-br}), we  let
$x\in\M_\sigma(v,\D)$ be such that $\pi(x)=[F]$, so that $G_x=\Aut(F)$.
Let us emphasize
 that it is thanks to the very  construction of the stacks
$\M_v(\A)$, and $\M_\sigma(v,\D)$, originally conceived 
by Lieblich, and Abramovich-Polishchuk and mentioned in 
Subsections \ref{mod-cmpx}, and \ref{mod-stack},
that we can speak about infinitesimal deformations of $F$, or more generally about families of objects in $\A$ parametrized by a scheme $T$.
\vskip 0.2 cm 

\vskip 0.2 cm 
Exactly as in the classical case of sheaves, one can set up the formalism 
of Artin-Schlessinger deformation theory. The upshot of the obstruction theory is the construction of a formal map, called   {\it a Kuranishi map for $[F]$ }:
\be\label{kura}
\kappa=\kappa_2+\kappa_3+\cdots \quad: \widehat{\Ext^1(F,F)}\longrightarrow \Ext^2(F,F)_0
\ee
(where the space on the right hand side is the kernel of the trace map from $\Ext^2(F,F)$ to $H^2(\cO_S)$), having the property that the formal scheme $D_\kappa:=\kappa^{-1}(0)$ parametrizes formal deformations of $F$ in the sense that $\kappa^{-1}(0)$ is the base for a {\it formal  versal deformation}  of $F$, called a {\it formal Kurranishi family for $[F]$}:

\be\label{kuran-fam}
\xymatrix{\widehat \F_\kappa\ar[d]^{\widehat \phi_\kappa}\\
S\times \kappa^{-1}(0)
}
\ee
While the higher order terms of a Kuranishi map are quite mysterious  and not unique since they depend on several choices, the second order term is transparent, and  independent from the specific choice of a Kuranishi map $\kappa$:

$$
\aligned
\kappa_2: \widehat{\Ext^1(F,F)}&\longrightarrow\Ext^2(F,F)_0\\
&e\quad\mapsto\quad e\cup e
\endaligned
$$
In other words if
$$
\mu: \Ext^1(F,F)\longrightarrow\Ext^2(F,F)_0
$$
is the cup product map, then $\kappa_2^{-1}(0)=\widehat{\mu^{-1}(0)}$.
The group  $G_x=\Aut(F)$ acts naturally on the domain and the codomain
of the Kuranishi map (\ref{kura}). It is a fundamental theorem by Rim [R], that there exists a $G_x$-equivariant Kuranishi map $h$ and a $G$-equivariant formal Kuranishi family $(\wh\F_h,\wh\phi_h)$, which is unique up to a unique $G$-equivariant isomorphism.

\vskip 0.2 cm
In the next section, we will show that the differential graded Lie algebra (dgla) $R\Hom(F,F)$ satisfies the {\it formality property}, meaning that there is {\it roof}  of quasi-isomorphisms between $R\Hom(F,F)$ and its cohomology algebra $\Ext^*(F,F)$ (regarded as  a dgla with trivial differential).
One of the consequences of  formality is the existence of an isomorphism 
\be\label{quadr}
\kappa^{-1}(0)\cong\kappa_2^{-1}(0)=\widehat{\mu^{-1}(0)}
\ee
It follows that the formal deformation space $\kappa^{-1}(0)$ is isomorphic to a  complete intersection of quadrics in $\widehat{\Ext^1(F,F)}$. More is true  \cite[Theorem 4.20] {Bmm19}: if 
$\kappa$ is a $G_x$-equivariant Kuranishi map then
one can choose the isomorphism (\ref{quadr}) to be $G_x$-equivariant (notice that $\kappa_2$ is trivially $G_x$-equivariant). It follows that
$\kappa_2^{-1}(0)$ is the base of a $G_x$-equivariant Kuranishi family.
\vskip 0.2 cm
Using the language of \cite[Section 21.2]{AHR20}, we see that the base $\kappa^{-1}(0)$ of a Kuranishi family $\kappa$
is isomorphic to a {\it miniversal deformation space $\widehat{\Def}(x)$ of $x$}:
\be\label{form-iso1}
\kappa^{-1}(0)\cong\widehat{\Def}(x)
\ee

More precisely, denoting by $\widehat \M_\sigma(v, \D)_x$ a formal neighbourhood of  $[F]\in \M_\sigma(v, \D)$, the formal deformation space  
$\kappa^{-1}(0)$
is a formal affine scheme and the natural map $\kappa^{-1}(0)\to\widehat \M_\sigma(v, \D)_x$ defined by the versal family $(\wh\F_\kappa,\wh\phi_\kappa)$, is formally smooth  because
of the  versality of the family $(\wh\F_\kappa,\wh\phi_\kappa)$. Following again the notation of \cite{AHR20}, we have a formal isomorphism:
$$
[\kappa^{-1}(0)/G_x]\cong {\widehat \M_\sigma(v, \D)_x}
$$

\subsection{Comparing with \'etale slices}\label{kur-vs-slic}

Following  \cite{AS18}, we next compare bases of Kuranishi families with \'etale slices. As in the preceding section, let $U$ be an \'etale slice
at $x\in  \M_v(\sigma)$,  and let $y\in U$ be a point mapping to $x$.
From the construction of the \'etale slice in \cite{AHR20}
there is a formal isomorphism 
$$
\widehat U_y\cong\widehat{\Def}(x)
$$
and therefore, by (\ref{form-iso1}), a formal isomorphism 
\be\label{form-iso2}
\widehat U_y\cong \kappa^{-1}(0)
\ee
By formality,  we can assume that $\kappa^{-1}(0)$ is $G_x$-equivariantly  isomorphic to $\kappa_2^{-1}(0)$. We can now proceed as in \cite{AS18}. Both the right hand side and the left hand side of (\ref{form-iso1}) are acted on by $G_x$, moreover, the isomorphism between them
induces a $G_x$-equivariant isomorphism at the level of tangent spaces (mediated by 
$G_x$-equivariant isomorphisms between each of these tangent spaces  and $T_x \M_v(\sigma)$). We can then apply Starr's result \cite[Proposition 4.7]{AS18} to change, if necessary, 
the isomorphism (\ref{form-iso1}), and make it $G_x$-equivariant. Now we use Bierstone and Milman result  \cite[ Proposition 4.7 ]{AS18} and we get a $G_x$-equivariant analytic local isomorphism
$$
(U,y)\cong (\nu^{-1}(0), 0)
$$
inducing the identity on tangent spaces $T_y(U)=\Ext^1(F,F)=T_0(\nu^{-1}(0))$. 
Via this isomorphism a $G_x$-equivariant analytic neighbourhood  of $0$ in $\nu^{-1}(0)$
is the base for a $G_x$-linearized deformation of $F$.
In conclusion we  get the following analogue 
of   \cite[Proposition 4.4 and Proposition 7.4]{AS18}.

\begin{prop}\label{et-kuran} Let  $v$ be an element in $K_{\num}(\D)$, and  let $\sigma=(\A, Z)\in\Stab^{\dag}(\D)$ be a full  numerical stability condition. Let $x=[F]\in\M_\sigma(v,\D)$ be a point corresponding to a polystable object $F\in \A$. Let $G=G_x$ be the stabilizer of $x$ in $\M_\sigma(v)$, and let $[U/G]\rightarrow \M_\sigma(v,D)$ be a slice \'etale at $x=[F]$.
Let $y\in U$ be a point mapping to $x$ under the morphism $U\rightarrow \M_\sigma(v)$.
Let $\mu: \Ext^1(F,F)\longrightarrow\Ext^2(F,F)_0$ be the cup product map.
Then
 there are $G$-invariant, saturated analytic neighbourhoods  $\U\subset U$ of $y$, and 
 $\V\subset \mu^{-1}(0)$ of $0$, and a $G$-equivariant pointed isomorphism 
\be\label{utov}
 \phi:(\U, x)\to (\V,0)
\ee 
 Furthermore there is 
 a $G$- linearized deformation of $F$ parametrized by the $G$-invariant analytic neighbourhood  $\U$.
 
 \end{prop}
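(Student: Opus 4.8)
The plan is to assemble a chain of formal isomorphisms, upgrade them to be $G$-equivariant, and only then pass from the formal to the analytic category while keeping track of the $G$-action and of saturation with respect to the good moduli space map; the argument runs exactly parallel to \cite[Proposition 4.4 and Proposition 7.4]{AS18}. First I would invoke Rim's equivariant Kuranishi theory to produce a $G$-equivariant formal Kuranishi map $\kappa$ with base $\kappa^{-1}(0)$, which by versality is $G$-equivariantly identified with the miniversal deformation space $\widehat{\Def}(x)$ as in (\ref{form-iso1}). On the other side, the construction of the slice \'etale in \cite{AHR20} identifies the formal neighbourhood $\widehat U_y$ with the very same $\widehat{\Def}(x)$, yielding the formal isomorphism (\ref{form-iso2}). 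The formality of $\Rhom(F,F)$, to be established in the next section, then supplies the $G$-equivariant identification $\kappa^{-1}(0)\cong\kappa_2^{-1}(0)=\widehat{\mu^{-1}(0)}$ of (\ref{quadr}); composing these gives a formal isomorphism $\widehat U_y\cong\widehat{\mu^{-1}(0)}$.

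The next step is to make this last formal isomorphism genuinely $G$-equivariant and then analytic. The subtlety is that the composite is a priori $G$-equivariant only on tangent spaces, through the canonical identifications $T_yU=\Ext^1(F,F)=T_0\mu^{-1}(0)$, each compatible with $T_x\M_\sigma(v,\D)$. To promote this to a $G$-equivariant formal isomorphism I would apply Starr's averaging result (as used in \cite[Proposition 4.7]{AS18}), which works precisely because $G=\Aut(F)$ is linearly reductive. Once the formal isomorphism is $G$-equivariant and induces the identity on tangent spaces, I would convert it into a $G$-equivariant \emph{analytic} germ isomorphism $(U,y)\cong(\mu^{-1}(0),0)$ by the Bierstone--Milman comparison between formal and analytic equivalence under a reductive group, again following \cite[Proposition 4.7]{AS18}. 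This produces the pointed $G$-equivariant analytic isomorphism $\phi$ of (\ref{utov}) on some pair of $G$-invariant neighbourhoods.

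It remains to arrange that the neighbourhoods can be taken saturated, and to exhibit the family. Here I would use the cartesian diagram of the slice theorem (\ref{sl-et})(b): the good moduli space map $q\colon U\to U\dslash G$ has the property that its fibres are the $S$-equivalence classes (unions of $G$-orbits with a single closed orbit), so any neighbourhood of the form $q^{-1}(W)$ with $W$ open in $U\dslash G$ is automatically $G$-invariant and saturated. Shrinking $W$ so that $q^{-1}(W)$ sits inside the neighbourhood obtained above and setting $\U=q^{-1}(W)$, the $G$-equivariance of $\phi$ forces it to descend to the analytic good quotients, so that $\V=\phi(\U)$ is the corresponding saturated neighbourhood of $0$ in $\mu^{-1}(0)$. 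Finally, pulling back the $G$-equivariant Kuranishi family (\ref{kuran-fam}), restricted to $\V$, along $\phi$ furnishes the asserted $G$-linearized deformation of $F$ over $\U$. I expect the genuinely delicate points to be the two upgrades of the second paragraph — converting tangent-level equivariance into honest formal equivariance via Starr, and the formal-to-analytic passage via Bierstone--Milman — since everything else is the formal assembly of isomorphisms already prepared in the preceding subsections.
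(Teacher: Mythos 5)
Your proposal is correct and follows essentially the same route as the paper: the chain of formal isomorphisms $\widehat U_y\cong\widehat{\Def}(x)\cong\kappa^{-1}(0)\cong\kappa_2^{-1}(0)=\widehat{\mu^{-1}(0)}$ (via the \cite{AHR20} slice construction, Rim's equivariant Kuranishi theory, and formality), then Starr's result to upgrade tangent-level equivariance to a genuinely $G$-equivariant formal isomorphism, and finally Bierstone--Milman to pass to a $G$-equivariant analytic germ isomorphism carrying the $G$-linearized deformation. Your explicit argument for arranging saturation via preimages $q^{-1}(W)$ under the good quotient map is a reasonable elaboration of a point the paper leaves implicit, but it does not change the structure of the argument.
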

 
 \subsection{ Formality of the Kuranishi family: the K3 case.}\label{formality}
 In this section we set $\D=\D^b(X)$, for a K3 surface $X$.
Let $[F]\in M_\sigma(v, \D)$ be a point corresponding to a $\sigma$-polystable object $F$ in the heart $\A$ of a numerical stability condition $\sigma=(\A, Z)$.  As above we  let
$x\in\M_\sigma(v,\D)$ be such that $\pi(x)=[F]$, so that $G_x=\Aut(F)$. We write
$$
F=\oplus_{i=1}^sF_i\otimes V_i
$$
where each $F_i$ is $\sigma$-stable, so that
$$
G:=G_x=\prod_{i=1}^s\GL (V_i)
$$
the result we want to prove is the following theorem proved in \cite{Bmm21} for the case where $F$ is a Gieseker semistable sheaf on a K3 surface.

\begin{theorem}\label{formalKu}  Let $F\in \A$ be $\sigma$-semistable. Then the DG-algebra
$\Rhom(F,F)$ is formal.
\end{theorem}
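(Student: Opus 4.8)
The plan is to prove formality of $\Rhom(F,F)$ for a $\sigma$-semistable object $F$ on a K3 surface by reducing to the already-known formality results for sheaves established in \cite{Bmm19}, \cite{Bmm21}, and then transporting that input through the rich Hodge-theoretic structure that the 2-Calabi--Yau property endows on the cohomology algebra $\Ext^*(F,F)$. The essential point is that on a K3 surface the derived category is a 2-Calabi--Yau category, so Serre duality gives a perfect pairing $\Ext^i(F,F)\times\Ext^{2-i}(F,F)\to\Ext^2(F,F)\cong\CC$, making $\Ext^*(F,F)$ a Frobenius algebra. Formality is not a statement about the multiplicative structure alone, but about the higher $A_\infty$ (or $L_\infty$) products, so the strategy must produce an honest roof of quasi-isomorphisms of dglas, not merely identify the cohomology ring.

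The key steps, in order, are as follows. First I would invoke the cyclic/Calabi--Yau structure on the dgla $\Rhom(F,F)$: because $X$ is a K3 surface, this dgla carries a nondegenerate cyclic pairing of degree $-2$ induced by the Serre-duality trace, so it is a cyclic (homotopy) Frobenius dgla. Second, following Bandiera--Manetti--Meazzini, I would appeal to the general principle that a cyclic dgla whose cohomology Frobenius algebra is concentrated in the appropriate degrees and whose potential is controlled by the quadratic cup product $\mu$ is formal; more precisely one uses that the relevant obstruction to formality lives in a Hochschild cohomology group which vanishes or is handled by the Calabi--Yau symmetry. Third, I would package the stable-object case: writing $F=\oplus_i F_i\otimes V_i$ with each $F_i$ stable, the endomorphism dgla decomposes as a block structure indexed by the $\Ext$-quiver, and the $G=\prod_i\GL(V_i)$-equivariance must be tracked throughout so that the final quasi-isomorphisms are compatible with the group action (this is what feeds into Theorem~\ref{formalKu}'s later use in establishing \eqref{quadr} and Proposition~\ref{et-kuran}).

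The main obstacle I anticipate is establishing formality for a general $\sigma$-semistable object $F$, rather than for a Gieseker-semistable sheaf, since \cite{Bmm21} proves the result precisely in the sheaf setting. An object $F$ in the heart $\A$ of a Bridgeland stability condition need not be a sheaf, so I cannot directly quote the sheaf formality theorem. The bridge I would build is to observe that all objects involved are still objects of $\D^b(X)$ with a 2-Calabi--Yau endomorphism dgla, and that the proof in \cite{Bmm19}, \cite{Bmm21} is ultimately a statement about cyclic dglas on a K3 category that does not genuinely use that $F$ is a sheaf—only the Calabi--Yau structure and the Frobenius property of $\Ext^*(F,F)$. Thus the hard part is to extract from the existing literature the purely homological-algebra core of their argument (the formality of cyclic dglas over a field with a degree $-2$ pairing, via Kaledin-type period maps or the $\partial\bar\partial$-type lemma for cyclic $L_\infty$-structures) and verify that the hypotheses hold verbatim for any object of the K3 category, independently of whether it is a complex or a sheaf. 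Once this abstraction is in place, the equivariance is a formal bookkeeping matter handled by Rim's equivariant Kuranishi theorem quoted above, and the conclusion of Theorem~\ref{formalKu} follows.
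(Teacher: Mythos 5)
Your overall strategy --- apply the Bandiera--Manetti--Meazzini cyclic-dgla machinery directly to an object of the K3 category, after observing that their argument should not genuinely require $F$ to be a sheaf --- is indeed the route the paper takes, but your execution has a genuine gap at the central step. You assert, as a ``general principle,'' that a cyclic dgla whose cohomology is a Frobenius algebra concentrated in the right degrees is formal, the obstruction being ``handled by the Calabi--Yau symmetry.'' No such principle appears in \cite{Bmm19}, \cite{Bmm21}, and it cannot hold in that generality: if cyclicity plus degree concentration alone implied formality, one would obtain formality for \emph{every} object of a 2-Calabi--Yau category by a soft argument, which is Davison's theorem \cite{Dav21} and requires entirely different (purity) techniques. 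The criterion actually used (Corollary 4.5 of \cite{Bmm21}) needs, besides quasi-cyclicity of degree $2$ and $H^i=0$ for $i\neq 0,1,2$, two further hypotheses, the first being an embedding of $H^0(L)$ as a sub-Lie algebra of $Z^0(L)$ splitting the projection $Z^0(L)\to H^0(L)$. This is exactly where polystability enters: writing $F=\oplus_i F_i\otimes V_i$, one needs each $F_i$ simple and $\Hom(F_i,F_j)=0$ for $i\neq j$ in order to produce the splitting $\Lie(G)=\oplus_i\gl(V_i)\hookrightarrow Z^0(L)$, $h_i\mapsto 1_{E_i^\bullet}\otimes h_i$, and to check that composing with $Z^0(L)\to H^0(L)$ gives the identity. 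Your proposal invokes the block decomposition only as bookkeeping for $G$-equivariance, not as the source of these hypotheses, so the actual formality mechanism is missing.

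The second gap is that you never produce the concrete quasi-cyclic model to which the criterion is applied; you defer this to ``extracting the homological-algebra core from the literature.'' That extraction is precisely the content of the paper's proof: choose finite locally free resolutions $E_i^\bullet\to F_i$ (these exist for arbitrary objects of $\D^b(X)$ on a smooth projective variety, and this is the only point at which ``sheaf'' gets relaxed to ``complex''), set $E^\bullet=\oplus_i E_i^\bullet\otimes V_i$, and take the Dolbeault model $L^\bullet=A^{0,\bullet}_X(\hom(E^\bullet,E^\bullet))$, a quasi-cyclic dgla of degree $2$ representing $\Rhom(F,F)$, on which $G$ acts only through the $V_i$ factors, so that equivariance of the quasi-isomorphism is automatic (Rim's theorem, which you invoke, concerns equivariant Kuranishi families and plays no role in the formality proof itself). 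Once this model and the splitting above are in place, the conclusion follows by quoting Corollary 4.5 of \cite{Bmm21}. So your plan points in the right direction but stops short of both the construction of the model and the verification of the criterion's hypotheses, and the ``general principle'' you substitute for them is false.
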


\begin{proof} Much  of  this proof can be obtained by following verbatim the proof of Theorem 5.1 in  \cite{Bmm21}. The first step is to substitute 
each $F_i$ with a  finite, locally free resolution $E_i^{\tb}\to F_i$. We then set
$E^{\tb}=\oplus_{i=1}^sE_i^{\tb}\otimes V_i\overset{q.i.}\cong F$. The action of $G$
on $F$, defined solely in terms of the $V_i$'s is authomatically transferred to an action on $E^\tb$, and the preceding quasi-isomorphism is trivially $G$-equivariant.
We then have
$$
\Rhom(F,F)=\Rhom(E^\tb,E^\tb)=\RG(\hom(E^\tb,E^\tb))
$$
the last identification being given by
the local freeness of the $E_i^\tb$'s. Denote by $\A_X^{0,\tb}(G^\tb)$
the total complex of sheaves of $(0,\tb)$-forms 
with coefficient in a complex $G^\tb$. Tensoring  the quasi-isomorphism $\A_X^{0,\tb}(\cO_X)\cong \cO_X$, by $\hom(E^\tb,E^\tb)$,  we get, again by local freeness:
\be\label{Rhom}
\Rhom(F,F)=\Rhom(E^\tb, E^\tb)=\RG(\hom(E^\tb,E^\tb))=\RG(\A_X^{0,\tb}(\hom(E^\tb,E^\tb)))
\,,
\ee
in $\D^b(\{\text{pt}\})$.
Set 
$$
A_X^{0,\tb}(\hom(E^\tb,E^\tb)):=\RG(\A_X^{0,\tb}(\hom(E^\tb,E^\tb)))\quad\text{and}\quad L^\tb:=A_X^{0,\tb}(\hom(E^\tb,E^\tb))
$$
The $(p,q )$-component of the differential of ${L^\tb}$:
$$
d_{L}: \,A^{0,p}(\hom^q(E^\tb,E^\tb))\longrightarrow A^{0,p+1}(\hom^q(E^\tb,E^\tb))\oplus A^{0,p}(\hom^{q+1}(E^\tb,E^\tb))
$$
is given locally by
\be\label{diff-L}
\phi=\omega\otimes f\quad\mapsto \quad d_{L} (\phi)\quad=\quad\ov\partial \omega\otimes f\quad\pm\quad\omega\otimes d_{\hom(E^\tb, E^\tb)}(f)
\ee

As above, $L^\tb$
is naturally equipped with a $G$-action solely involving the $V_i$'s. As such, the  $G$-action  does not interfere with the $\ov\partial$-differential coming from $\A_X^{0,\tb}$, and therefore commutes with the differential $d_{L}$. Thus,  the quasi-isomorphism $\Rhom(F,F)\cong L^\tb$ is  $G$-equivariant.  We also have
$$
Z^0(L^\tb)=\Hom_{\Kom(X)}(E^\tb, E^\tb)=\oplus\Hom_{\Kom(X)}(E_i^\tb, E_i^\tb)\otimes\frak{gl}(V_i)
$$
Recall that each $F_i$ is stable, and hence simple, so that, given an element

$$
f_i=(\dots, f_i^{-1}, f_i^0)\in\Hom_{\Kom(X)}(E^\tb_i, E^\tb_i)
$$

and writing $\ov {f_i^0}: F_i=E^0_i/dE^{-1}_i\to E^0_i/dE^{-1}_i=F_i$, we have   $\ov {f_i^0}=c_i\cdot 1_{F_i}$, with $c_i\in \CC$.
Now consider the natural injective 
homomorphism 
\be\label{fi}
\aligned
 \Lie&(G)=\oplus_{i=1}^s\frak{gl}(V_i)\longrightarrow Z^0(L^\tb)\\
&h=(h_1,\dots, h_s)\,\,\longmapsto\,\,(1_{E_1^\tb}\otimes h_1,\dots,1_{E_s^\tb}\otimes h_s)
\endaligned
\ee
and the natural  projection
$$
\aligned
&Z^0(L^\tb)\longrightarrow H^0(L^\tb)=\Hom_{\D}(F,F)=\Lie(G)\\
(f_1\otimes h_1,\dots, f_s&\otimes h_s)\,\,\longmapsto\,\,1_{F_1}\otimes c_1h_1,\dots 1_{F_s}\otimes c_sh_s)
\endaligned
$$
Their composition is the identity. We are now in the position to use Corollary 4.5 in \cite{Bmm21}.
The fact that  $L^\tb$ is a quasi-cyclic DG-Lie algebra of degree 2, with $H^i(L^\tb)=0$ for $i\neq0,1,2$, is proved exactly as in the beginning of   \cite[Section 5]{Bmm21}. Regarding item (1)
in Corollary 4.5 (loc.cit.), we can take $H^0=\Lie(G)$ embedded in $Z^0(L^\tb)$ via (\ref{fi}). Regarding item (2), one proceeds exactly as in the last three paragraphs of the proof of Theorem
5.1 (loc.cit.). In conclusion  $L^\tb$ is formal 
\end{proof}

\begin{rem}
Essentially the same proof of the theorem above appears also in \cite{CPZ21}. The only difference is the choice of the $G$-equivariant locally free resolution: ours is easier to construct, but only works for polystable objects (more precisely, for objects that are direct sum of simple objects which have no homomorphisms between non isomorphic summands).
\end{rem}

\begin{rem}\label{simple} In the proof of the preceding theorem, we never used the stability of the $F_i$'s, but only the fact that 
$\dim\Hom(F_i,F_j)=\delta_{i,j}$.
\end{rem}

\begin{rem}\label{twist} The  same proof works for the case in which 
$\D=\D^b(X,\alpha)$ is the derived category of a K3 surface $X$ twisted by a Brauer class $\alpha$. For this one should notice that finite, locally free resolutions exist also in the twisted case \cite[p.908]{HL10} \cite[Lemma 2.1.4]{Cal00} and that 
the $\hom$-complex is untwisted.
\end{rem}

\subsection{ Formality of the Kuranishi family for the Kuznetsov component}\label{formality-K}

Let $X$ be a cubic fourfold.
Let $\Ku(X)$ be the Kuznetsov component of $\D^b(X)$.
 For the definition of the lattice $\Lambda:=\wt H^{1,1}(\Ku(X,\ZZ))$ we refer the reader to \cite[Section 3.4]{MS17}.
Let $v\in\Lambda$, and $\sigma\in \Stab_\Lambda^\dagger(\Ku(X))$. Finally let $F=\oplus_{i=1}^sF_i\otimes V_i\in \Ku(X)$ be a $\sigma$-polystable
object with $v(F)=v$, where $F_i$ is $\sigma$-stable, with $v(F_i)=v_i$, and $\dim_\CC V_i=n_i$. We will prove the following theorem.

\begin{theorem}\label{form-Kuz} The dg-algebra $R\Hom(F,F)$ is formal.
\end{theorem}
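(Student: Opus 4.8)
The plan is to reduce the Kuznetsov case to a situation where I can invoke an abstract formality criterion for differential graded Lie algebras, mirroring the strategy used in the K3 proof above but supplying the missing geometric inputs (a local free resolution and a Hodge-theoretic Serre duality pairing) by other means. The essential obstruction is that $\Ku(X)$ is not the derived category of a variety, so the Dolbeault-type model $\A_X^{0,\bullet}(\hom(E^\bullet,E^\bullet))$ used in Theorem~\ref{formalKu} is not available, and one cannot directly write down a $G$-equivariant quasi-cyclic dg-model for $\Rhom(F,F)$. The whole difficulty is therefore to produce \emph{some} $G$-equivariant dg-model that is quasi-cyclic of degree $2$ and to which Corollary~4.5 of \cite{Bmm21} applies.

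\textbf{Step 1: the 2-Calabi--Yau structure.} First I would recall that $\Ku(X)$ is a $2$-Calabi--Yau category, i.e. there is a functorial Serre duality isomorphism $\Ext^i(E,F)\cong\Ext^{2-i}(F,E)^\vee$. Consequently, for the polystable object $F=\oplus_i F_i\otimes V_i$ the graded cohomology algebra $\Ext^\bullet(F,F)$ is concentrated in degrees $0,1,2$ and carries a non-degenerate graded-symmetric pairing of degree $2$. As in Remark~\ref{simple} I only need $\dim\Hom(F_i,F_j)=\delta_{ij}$, which holds because the $F_i$ are $\sigma$-stable of the same phase, hence simple with no maps between non-isomorphic summands. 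This records that the cohomology is the cohomology of a \emph{cyclic} structure of degree $2$.

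\textbf{Step 2: a $G$-equivariant dg-model.} Next I would replace $\Rhom(F,F)$ by a dg-model on which $G=\prod_i\GL(V_i)$ acts and which is quasi-cyclic of degree $2$. Here is where I depart from the K3 argument and invoke the ideas of Kaledin \cite{Ka07} and Lunts \cite{Lunts08} advertised in the introduction: one embeds $\Ku(X)$ into an ambient smooth proper dg-enhanced category (the derived category $\D^b(X)$ of the cubic fourfold, or a suitable gluing), chooses an $h$-injective/$h$-projective resolution of $\bigoplus_i F_i$, and forms the endomorphism dg-algebra $L^\bullet=\Rhom(E^\bullet,E^\bullet)$ of $E^\bullet=\oplus_i E_i^\bullet\otimes V_i$. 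The $G$-action lives solely on the multiplicity spaces $V_i$, so it commutes with the differential exactly as in (\ref{diff-L}), and the quasi-isomorphism $\Rhom(F,F)\cong L^\bullet$ is $G$-equivariant. The Serre pairing of Step 1, realized at the chain level via the $2$-Calabi--Yau trace on the enhancement, endows $L^\bullet$ with the structure of a quasi-cyclic dg-Lie algebra of degree $2$ with $H^i(L^\bullet)=0$ for $i\neq 0,1,2$.

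\textbf{Step 3: apply the abstract criterion.} Finally, I would verify the hypotheses of Corollary~4.5 in \cite{Bmm21} verbatim as in the proof of Theorem~\ref{formalKu}: for item~(1) take $H^0=\Lie(G)=\oplus_i\gl(V_i)$ embedded in $Z^0(L^\bullet)$ by $h\mapsto(1_{E_i^\bullet}\otimes h_i)$ as in (\ref{fi}), with the splitting $Z^0(L^\bullet)\to H^0(L^\bullet)=\Lie(G)$ induced by simplicity of each $F_i$; for item~(2) one repeats the cohomological computation from the last paragraphs of \cite[Section~5]{Bmm21}. Then Corollary~4.5 yields that $L^\bullet$, and hence $\Rhom(F,F)$, is formal. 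I expect the genuinely hard part to be Step~2: constructing a \emph{cyclic} chain-level enhancement of the $2$-Calabi--Yau pairing that is simultaneously $G$-equivariant, since the Serre functor on $\Ku(X)$ is only naturally $2$-CY up to the shift and making its trace strictly compatible with the dg-structure is exactly the technical content that Kaledin's and Lunts' methods are needed for.
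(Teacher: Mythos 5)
Your Step 2 contains a genuine gap, and the tools you cite do not fill it. Embedding $\Ku(X)$ into $\D^b(X)$ and taking an h-projective resolution does produce a $G$-equivariant dg-model $L^\bullet$ of $\Rhom(F,F)$, but it does \emph{not} produce the quasi-cyclic degree-$2$ pairing that Corollary 4.5 of \cite{Bmm21} requires: the ambient category $\D^b(X)$ of a cubic fourfold has Serre functor $-\otimes\omega_X[4]$, so its chain-level trace pairing has the wrong degree and does not restrict to the $2$-CY pairing of $\Ku(X)$. Lifting the categorical isomorphism $S_{\Ku(X)}\cong[2]$ to a strictly cyclic (or quasi-cyclic) structure on an explicit $G$-equivariant cochain model is precisely the hard analytic/algebraic input that the Dolbeault model with $\int_X \operatorname{tr}(-\wedge-)\wedge\sigma_X$ provides in the K3 case, and you give no construction of it. Moreover, your appeal to Kaledin \cite{Ka07} and Lunts \cite{Lunts08} for this purpose rests on a misreading: those papers do not construct cyclic enhancements of Calabi--Yau pairings; they study \emph{formality in families}, the key output (Lunts, Corollary 6.10 and Remark 6.11) being that, for a suitably cofibrant family of dg-algebras over a base, the locus of points where the fiber is formal is closed. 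A chain-level result of the type you need for an arbitrary $2$-CY category is essentially the content of Davison's theorem \cite{Dav21}, which the paper deliberately avoids invoking.

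The paper's actual route is entirely different and uses Kaledin--Lunts in the family sense. One first proves (Proposition \ref{density}, via Hasse--Minkowski and Green's density of the Noether--Lefschetz locus) that in any non-isotrivial family of cubic fourfolds the locus where $\Ku(X_t)$ is \emph{geometric}, i.e.\ equivalent to some $\D^b(S_t,\alpha_t)$ for a twisted K3 surface, is dense. One then deforms $X$ along an affine curve $T$ inside the locus of cubics where $v_1,\dots,v_s$ stay algebraic, and uses Perry's deformation theorem \cite[Theorem 1.4]{Per22} plus semicontinuity to extend the summands $F_i$ to families $\F_i$ with $\F_{i,t}$ simple and $\Hom(\F_{i,t},\F_{j,t})=0$ for $i\neq j$. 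A \v{C}ech construction yields a cofibrant dg-algebra $\A$ over $R=\cO(T)$ whose derived fibers are $\Rhom(\F_t,\F_t)$; Lunts's theorem then says the set of $t$ where this fiber is formal is closed in $T$. At the dense set of geometric points formality holds by the twisted-K3 version of Theorem \ref{formalKu} (using Remarks \ref{simple} and \ref{twist}, since there the $\F_{i,t}$ are only simple, not stable, and the category is twisted), so formality holds at every $t$, in particular at $t_0$, giving the theorem for $F$. In short: the paper never builds a cyclic model on the Kuznetsov side at all; it transports formality from the K3 side through a family, which is exactly the step your proposal is missing.
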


Before proving this theorem we need
a preliminary result. Recall  that $\Ku(X)$ is said to be {\it geometric} if 
$\Ku(X)\cong\D^b(X,\alpha)$, for some K3 surface $S$ and some Brauer class  $\alpha$.

\begin{prop}\label{density} Let $\X\to T$ be a non-isotrivial family of cubic fourfolds. Then the subset $\{t\in T \,\,|\,\,\Ku(X_t)\,\,\text{is geometric}\}$ is dense in the analytic topology.
\end{prop}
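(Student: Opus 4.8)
The plan is to establish density of the geometric locus by a classical Hodge-theoretic argument, identifying \emph{geometricity} of $\Ku(X_t)$ with a condition on the Hodge structure of the cubic fourfold $X_t$ and then invoking the density of special Noether--Lefschetz-type loci. Recall that by the work of Addington--Thomas, a smooth cubic fourfold $X$ has $\Ku(X)$ geometric (indeed equivalent to $\D^b(S,\alpha)$ for a K3 surface $S$ and a Brauer class $\alpha$) precisely when the Hodge structure on the primitive cohomology $H^4_{\mathrm{prim}}(X,\ZZ)$ admits a primitive embedding of a rank-two lattice containing a class of self-intersection governed by the relevant discriminant condition; equivalently, when $X$ lies in one of the Hassett divisors $\C_d$ with $d$ satisfying the appropriate numerical condition (the $d$ of the form $\tfrac{2n^2+2n+2}{a^2}$). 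So first I would translate the statement ``$\Ku(X_t)$ is geometric'' into the statement ``the polarized Hodge structure $H^4(X_t)$ acquires an extra Hodge class of a suitable type,'' i.e.\ $t$ lies on a Noether--Lefschetz locus inside $T$ cut out by the vanishing of a period integral.

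Next I would set up the period map for the family $\X\to T$. Since the family is non-isotrivial, the induced map from (the universal cover of) $T$ to the period domain $D$ for cubic fourfolds is \emph{non-constant}; by Griffiths transversality and the infinitesimal Torelli theorem for cubic fourfolds its image is a positive-dimensional analytic subvariety. The geometric locus is then the preimage of the union of the relevant Hodge loci (hyperplane sections $D_\lambda = \{\omega \in D : \langle \omega,\lambda\rangle = 0\}$ for integral classes $\lambda$ of the correct type). The key classical input is that for a \emph{non-constant} variation of Hodge structure of this weight, the union of such rational hyperplane sections meeting the image is \emph{dense} in the analytic topology; this is the standard density-of-Noether--Lefschetz-loci phenomenon, going back to the density argument of Green (and Ciliberto--Harris--Miranda, Voisin), which exploits that the real and imaginary parts of the period differential span enough directions to force the countable union of rational hyperplanes to accumulate everywhere the period map moves.

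Concretely, the steps in order are: (1) recall/cite Addington--Thomas to reduce geometricity to membership in the countable union of Hassett divisors $\bigcup_{d \text{ adm.}} \C_d$; (2) pull back via the period map $\mathcal P\colon T \to \Gamma\backslash D$ to realize the geometric locus as $\mathcal P^{-1}\big(\bigcup_d \C_d\big)$; (3) use non-isotriviality plus infinitesimal Torelli to guarantee $\mathcal P$ is non-constant, hence has positive-dimensional image; (4) invoke the analytic density criterion to conclude that this countable union of codimension-one special loci meets $\mathcal P(T)$ in a dense set, and pull the density back to $T$. The main obstacle I anticipate is step (4): verifying that the density criterion genuinely applies here requires checking a non-degeneracy (``spanning'') condition on the derivative of the period map along $T$, namely that the image of $T$ is not contained in any single proper special sub-locus and that the relevant admissible classes $\lambda$ are Zariski-dense in the appropriate real locus. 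The cleanest route is to reduce to the one-parameter situation by slicing $T$ with a general curve and applying the classical density theorem for Noether--Lefschetz loci along a non-constant period map, which handles precisely this spanning issue; one then needs only that admissible discriminants $d$ occur densely among the integers, which is an elementary number-theoretic check on the congruence conditions defining the admissible set.
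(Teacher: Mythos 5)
Your overall strategy --- translate geometricity of $\Ku(X_t)$ into a Hodge-theoretic condition and then invoke density of Noether--Lefschetz loci --- belongs to the same family of argument as the paper's, but the two proofs diverge at the key reduction, and it is exactly there that your step (4) has a genuine gap. The classical density theorem of Green (which is all that ``slicing $T$ by a general curve and applying the classical density theorem'' provides) produces, densely in $T$, points where \emph{some} new integral class becomes algebraic; it gives no control whatsoever on the discriminant of the rank-two lattice $\langle h^2,\lambda\rangle$ so created. Since Hassett divisors with \emph{inadmissible} $d$ are nonempty, and on them $\Ku$ is not geometric, density of the union of \emph{all} Noether--Lefschetz loci does not imply density of the union of the \emph{admissible} ones. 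Your proposed repair --- ``admissible discriminants $d$ occur densely among the integers, an elementary number-theoretic check'' --- is not the statement you need. What must be shown is that inside the open cone of \emph{real} classes swept out by the moving hyperplanes $\omega(t)^{\perp}$ (this requires the refined, open-cone version of the density criterion, not just the crude one) there exist \emph{integral} classes of admissible discriminant. Admissibility is a condition on prime factorizations (no factors $4$, $9$, or $p\equiv 2\pmod 3$, resp.\ its twisted analogue), and rescaling a class multiplies its square by a perfect square, so a ray through a class of inadmissible square never contains an admissible one: the problem cannot be settled ray-by-ray or by counting congruence classes. Closing it requires an approximation/equidistribution argument for integral points of prescribed admissible square inside prescribed open cones of $H^4_{\mathrm{prim}}(X,\RR)$ --- standard, but genuinely more than an elementary congruence check.

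The paper's proof is engineered precisely to avoid this arithmetic. It uses \cite[Proposition 33.1]{BLM${}^+$19} (Huybrechts' theorem, the lattice-theoretic form of the criterion you attribute to Addington--Thomas): $\Ku(X_t)$ is geometric if and only if $\Lambda_t=\wt H^{1,1}(\Ku(X_t),\ZZ)$ contains an isotropic vector; and by Hasse--Minkowski an indefinite lattice of rank $\geq 5$ automatically represents zero. Hence it suffices to make $\rk\Lambda_t$ jump --- any jumps, of any discriminant --- until $\rk\Lambda_t\geq 5$, and this is delivered by iterating Green's theorem \cite[Proposition 17.20]{V04} with no control on which classes appear. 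This is what the Hassett-divisor formulation costs you: insisting on landing on admissible divisors forces you to control discriminants, which the crude density theorem cannot do. (Conversely, your refined route, once the approximation step is actually proved, would have the merit of applying directly to one-dimensional bases, where iterating Green's theorem is delicate because the Noether--Lefschetz locus of a curve is a countable union of points rather than of positive-dimensional families; but as written, your step (4) does not contain that proof.)
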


\proof By \cite[Proposition 33.1]{BLM${}^+$19}, $\Ku(X_t)$ is geometric if and only if there exists $v\in \Lambda_t:=\wt H^{1,1}(\Ku(X_t,\ZZ)$ such that $v^2=0$.
On the other hand,  by the Hasse-Minkowski theorem, the semi-definite quadratic form on $\Lambda_t$ represents zero as soon as $\rho_t:=\rk \Lambda_t\geq 5$. It is therefore enough to prove that
$$
T_{\geq5}:=\{t\in T \,\,|\,\,\rho_t\geq5\,\}
$$
is dense in $T$. But this follows immediately from the following fact.

{\bf Fact}.(Green) \cite[Proposition 17.20]{V04} {\it Let $\X\to T$ be as above.
Let $\ov \rho=\min_{t\in T}\{\rho_t\}$ then the Noether-Lefschetz locus
$NL=\{t\in T\,\,|\,\,\rho_t>\ov\rho\}$ is dense in $T$}.

\vskip 0.2 cm
An immediate consequence is that for every fixed $\rho>\ov\rho$, the subset
$\{t\in T\,\,|\,\,\rho_t>\rho\}$ is dense in $T$; in fact it is a dense countable union 
of closed proper components.
\endproof

\proof (of Theorem \ref{form-Kuz}) If $\D$ is equivalent to the derived category $\D^b(S,\alpha)$  of $\alpha$-twisted sheaves 
on a K3 surface $S$ (for some Brauer class $\alpha$), then we are done by using 
Theorem \ref{formalKu} and \cite[Proposition 1.4]{BZ19}.
Let $\C$ denote the moduli space of cubic fourfolds, and let $\C_{v_1,\dots v_s}$ be the locus where the degree four part of the  vectors $v_1,\dots v_s$ stays algebraic. We may assume that $\dim \C_{v_1,\dots v_s}>0$.
In fact, by the same reasoning (using Hasse Minkowski) as in the proof of the preceding Proposition,
we may assume that  $\rk\langle v_1,\dots v_s\rangle\leq 4$, so that
$\codim_\C \,\,\C_{v_1,\dots v_s}\geq 4$.
We can then take a non constant family 
$f:\X\longrightarrow T$ parametrized by a smoooth affine curve $T\subset  \C_{v_1,\dots v_s}$,  such that $\X_{t_0}=X$:
the cubic fourfold we started with. By construction, the vectors $v_i$ belong to the local system
$K_{num}(\operatorname{Ku}(\X/T))$. We claim that, up to a base change, we can assume to have families of objects
$\F_i\in \D(\X/T)$, with $\F_{i,0}=F_i$, and such that $\F_{i,t}$ is simple, and $\Hom(\F_{i,t}, \F_{j,t})=0$, for $i\neq j$.
But this is Theorem 1.4 in \cite{Per22}, plus semicontinuity. By what we said, there is a dense set of points  $t\in T$
such that $\D_{t}=\operatorname{Ku}(X_{t})=D^b(S_t,\alpha_t)$, for some twisted K3 surfaces $(S_t,\alpha_t)$.
Set $T=\spec R$. 
By construction the family $f:\X\longrightarrow T$ is  equipped with a relative  object 
$$
\F=\oplus_{i=1}^{s}\F_i\otimes V_i\quad\in\quad \Ku(\X/T)\quad\subset \quad\D^b_{\text{perfect}}(\X/T)\,,\qquad 
$$
In order to prove the formality of $R\Hom(F,F)$, we want to find a good $R$-algebra representative  for 
$$
Rf_*\hom(\F, \F)\quad\in\quad \D^b(\spec (R))
$$
and then use Lunts's result  \cite{Lunts08}, which  tells us that, under suitable hypotheses, the locus of points $t\in T$
for which $R\Hom(\F_t,\F_t)$ is formal is a closed set.
Passing from an $R$-module $M$ to the corresponding sheaf $\wt M$ gives the 
identification $\D^b(\spec (R))=\D^b(R\text{-Modules})$.
As $T$ is affine, by virtue of Lieblich's   \cite[Corollary 2.1.7]{Lie06}, $T$-perfect complexes on $\X$ are in 
fact strictly  perfect, so that
$$
\D^b(\X/T)=\{\F\in \D^b(\X)\,\,|\,\, \F\overset{q.i.}\cong \E^\ftb=\oplus \E^j\,,\text{bdd complex, with}\,\, \E^j\,,\,\, T\text{-flat} \,    \}
$$
For each $i=1,\dots, s$, choose a quasi-isomorphism $\E^\ftb_i\cong\F_i$, where $\E^\ftb_i$
is a bounded complex and the $\E^j_i$'s are flat over $T$. Let $\U=\{U_i\}_{i\in I}$ be an affine cover of $\X$, with $U_i=\spec B_i$. For $\alpha=\{\alpha_{i_0},\dots,\alpha_{i_n}\}$, with $\alpha_{i_k}\in I$, set $|\alpha |=n$, and
$U_\alpha=U_{\alpha_{i_0}}\cap\cdots\cap U_{\alpha_{i_n}}=\spec B_\alpha$. As $R$ is a PID, the flat $R$-module $B_\alpha$ is  projective. 
 Consider the complex
$\hom(\E^\ftb, \E^\ftb)=\oplus_i\hom^i(\E^\ftb, \E^\ftb)=\oplus_{i,j}\hom(\E^i, \E^j)$, and its 
$\check{\text{C}}$ech double complex
$$
\underset {p,q}\oplus\,\C^p(\U,\hom^q(\E^\ftb, \E^\ftb))
=\underset {p,q}\oplus\,\prod_{|\alpha|=p}\hom^q({\E^\ftb}_{|U_\alpha}, {\E^\ftb}_{|U_\alpha})
$$
The total complex $\wt\A\in \D^b(\spec(R))$ of this double complex, is flat over $\spec(R)$, and has a dg-structure,
with differential  given by $\delta\pm d$, where $\delta$ is the $\check{\text{C}}$ech differential, and $d$ is the differential of the complex $\hom(\E^\ftb, \E^\ftb)$. The complex $\wt\A$ represents
$Rf_*\hom(\F, \F)\in\D^b(\spec (R))$. 
On the other hand, the total complex $\A\in \D^b(R\text{-Modules}) $ of the corresponding $\check{\text{C}}$ech double complex of $R$-modules
$$
\underset {p,q}\oplus\,C^p(\U,\hom^q(\E^\ftb, \E^\ftb))
=\underset {p,q}\oplus\,\prod_{|\alpha|=p}\Hom^q({\E^\ftb}(U_\alpha), {\E^\ftb}(U_\alpha))
$$
is a dg-algebra, which, with its $R$-module structure, represents, (that is, it  is q.i. to) the object $Rf_*\Hom(\E^\ftb, \E^\ftb)\in  \D^b(R\text{-Modules})$.
In conclusion: $\wt\A$ represents $Rf_*\hom(\F, \F)\in\D^b(\spec (R))$, and $\A=R\Gamma\wt A$
represents $R\Gamma Rf_*\hom(\F, \F)=  Rf_*\Hom(\E^\ftb, \E^\ftb)\in  \D^b(R\text{-Modules})$.
From this point of view:
$$
[{R\Gamma Rf_*\Hom(\F, \F)}]^{\,\,\wt{}}=Rf_*\hom(\F, \F)
$$

Refining the cover $\U$, if necessary, we may assume that $\Hom^q({\E^i}(U_\alpha), {\E^j}(U_\alpha))$ is projective over $T$. This makes the dg-algebra  $\A$ a {\it cofibrant $R$-module}.
Moreover, base change in cohomology gives
\be\label{request}
A\otimes_Rk(t)=\wt\A_t=L\iota_t^*Rf_*\hom(\F, \F)=\Rhom(\E^\ftb_t, \E^\ftb_t))\in \D^b(\text{pt})
\ee
Where $\wt\A_t=(\wt\A)_{\spec(k(t))}$. Finally
$$
H^i(\wt A)=\Gamma(T, \H^i(Rf_*\hom (\E^\ftb, \E^\ftb)))
$$
where $\H^i(Rf_*\hom (\E^\ftb, \E^\ftb):=\ext^i_f(\E^\ftb, \E^\ftb)$.
It follows  that $\A$ verifies the hypotheses of Corollary 6.10 (b) and Remark 6.11 in \cite{Lunts08}.
Thus the locus  of  $t\in\spec R$, where $\wt\A_t$ is formal is closed. On the other hand, by the previous Proposition, there is a dense set in $\spec R$, where $\Ku(X_t)\cong\D^b(S_t,\alpha_t)$, where $(S_t,\alpha_t)$ is a twisted K3 surface. The conclusion comes from Theorem \ref{formalKu}.\endproof

%%%%%%%%%%%
%%%%%%%%%%%%
%%%%%%%%%%%%

\section{ Quivers}
Regarding  the notation and the terminology for quiver varieties we refer to \cite[section 5, pp. 673-680]{AS18}.
Here we  recall, very briefly, a few fundamental facts.
\subsection{The moment map and semi-stability for quiver representations}

Consider a quiver $Q=(I,E)$, and its double $\ov Q=(I, E \sqcup E^{op})$. 
Having fixed a {\it dimension vector}
$$
\mathbf n=(n_1,\dots, n_s)\in \ZZ_{\ge 0}^s
$$
and vector spaces $V_i$, $i=1,\dots,s$, with
$$
\dim V_i=n_i,
$$
one defines the vector space of  {\it $\mathbf n$-dimensional representations of} $Q$
by setting
$$
\rep(Q, \mathbf n):=\underset{e\in E}\bigoplus \Hom (V_{s(e)}, V_{t(e)})
$$
Where $s(e)$, and  $t(e)$ are the source and the tail of the edge $e$.
There is 
 an identification
$$
\rep(\ov Q, \mathbf n)=\rep(Q, \mathbf n)\oplus \rep( Q, \mathbf n)^\vee
$$
and thus a natural symplectic form on $\rep(\ov Q, \mathbf n)$. The group
$$
G:=G(\mathbf n):=\prod_{i=1}^s\GL(n_i)
$$
acts on $\rep(\ov Q, \mathbf n)$ in a natural way via conjugation, and
since this action  respects the symplectic form there is a {\it moment map} given by
\be \label{moment map}
\aligned
\mu_{\mathbf n}: \rep(\ov Q, \mathbf n)& \longrightarrow \gl(\mathbf n)\cong \gl(\mathbf n)^\vee\\
(x,y^\vee) & \longmapsto\mu(x,y^\vee)=\sum_{e\in E}[x_e, y_e^\vee]
\endaligned
\ee
 
Following Marsden-Weinstein symplectic reduction,  one is interested in the GIT quotient
$$
\mathfrak M_{\chi}(\mathbf n):=\mu_{\mathbf n}^{-1}(0)\sslash_{\chi} G
$$
where
$\chi: G\to \CC^\times$ is a rational character. 
\vskip 0.2 cm
{\bf Brief reminder.}
Here we 
recall that, if $X=\spec A$, then
$X\sslash_\chi G=\operatorname{Proj}\left(\underset{n\geq 0}\oplus A_n\right)$
where 
$A_n=\{f\in A\,\,|\,\,(g\cdot f)(x)=\chi(g)^nf(x)\}$. Points  $x\in X$
for which there exists $f\in A_n$
with $f(x)\neq0$ are said to be {\it $\chi$-semistable}, and they form a subset $X_\chi^{ss}\subset 
X$. Points in $X_\chi^{ss}$ with closed orbit and finite stabilizers are called {\it$\chi$-stable}, and they form a subset $X_\chi^{s}\subset 
X_\chi^{ss}$.
Two $\chi$-semistable points are said to be {\it$S_\chi$-equivalent}  if the closure of their orbits meet in $X_\chi^{ss}$. 
The projection \be\label{GIT-quot}
X_\chi^{ss}\longrightarrow X\sslash_\chi G
\ee
establishes a one-to-one correspondence between 
the set of closed points in  $X_\chi^{ss}\longrightarrow X\sslash_\chi G$ and the set of
$S_\chi$-equivalence classes in $X_\chi^{ss}$. The natural map
$$
\xi:  X\sslash_\chi G\longrightarrow X\sslash G=\spec(A^G)
$$
is often a desingularization. 

\vskip 0.2 cm
We are interested in desingularization maps
$$
\mathfrak M_{\chi}(\mathbf n):=\mu_{\mathbf n}^{-1}(0)\sslash_{\chi} G \longrightarrow \mu_{\mathbf n}^{-1}(0)\sslash G
=\mathfrak M_{0}(\mathbf n)
$$

Moreover we have $\dim\mathfrak M_{\chi}(\mathbf n)=d(\mathbf n)+2$, where

\be\label{dn}
d( \mathbf n)= {}^t\mathbf  n D  \ \mathbf n
\ee
and $D$ is the negative of the Cartan matrix associated to $Q$,
\cite[p. 674]{AS18}. In Crawley-Boevey's notation \cite{C-B01}:

\be\label{dim-quiv}
\dim\mathfrak M_{\chi}(\mathbf n)=d( \mathbf n)+2=2p( \mathbf n)
\ee
\vskip 0.2 cm
Rational characters 
$\chi: G(\mathbf n)\to \CC^\times$
are in a one-to-one correspondence with vectors
$
\theta=(\theta_1,\dots,\theta_s)\in \ZZ^s
$
via the formula 
$$
\chi_{\theta}(g)=\prod\det(g_i)^{\theta_i}
$$
where $g=(g_1,\dots,g_s)\in G=G(\mathbf n)$. 
\vskip 0.2 cm 
It is an important fact that stability and semi-stability may be expressed in terms of slope.
Consider, in $\ZZ^s$, the orthogonal complement of the dimension vector:
$
\mathbf n^\perp\subset \ZZ^s,
$
and set
$$
\W_\mathbf n= \mathbf n^\perp\otimes \QQ
$$

consider a point (a character) $\theta\in\W_\mathbf n$.
Let $V= \oplus V_i$ be an $ \mathbf n$-dimensional representation of  $\ov Q$. For any sub-representation 
$
W=\oplus_{i\in I} W_i,\,(W_i\subset V_i)
$
 the $\theta$-slope of $W$ is defined by setting 
$$
{\slope}_\theta(W)=\frac{\theta\cdot\dim W}{\sum \dim W_i }=\frac{\sum_{i=1}^s\theta_i\dim W_i}{\sum \dim W_i }
$$
so that, in particular, $\slope_\theta(V)=0$. A non-zero representation $V$ is said to be {\it $\theta$-semistable} if, for every sub-representation $W$ of $V$, we have $\slope_\theta(W)\leq 0$ and is said to be {\it $\theta$-stable}, if  the strict equality holds for every non-zero, proper sub-representation. A $\theta$-semistable representation admits a 
Jordan-H\"older filtration, and
 two representations are said to be  $S_\theta$-equivalent, if their associated, graded, Jordan-H\"older objects are isomorphic.
\vskip 0.2 cm 
It is a theorem due to King \cite[ p. 677]{AS18} that a representation
$V\in\Rep(\ov Q, \mathbf n)$, is $\theta$-semistable if and only if it is  $\chi_\theta$-semistable
(and the same is true for stability), and that $S_\theta$-equivalency coincides with  $S_{\chi_\theta}$-equivalency.
\vskip 0.2 cm 
\subsection{ Walls and chambers decomposition of $\W_\mathbf n$} For this we fix 
a {\it "positive root"}, that is an element $\alpha \in \ZZ^s_+$ such that $\alpha_i\leq n_i$, for all $i\in I$ , and having the further property  that
the
subgraph of $Q$
consisting of those vertices $i$ for which $\alpha_i\neq0$  and all the edges joining these vertices is connected \cite{AS18}, \cite{Kac80}, \cite{Kac82}. 
If $\alpha$ positive root, then $d(\alpha)+2\geq0$, so that, 
in the quiver world, positive roots are the equivalent of  Mukai vectors $v$ with $v^2+2\geq0$.
The {\it  (potential) wall associated to a positive root $\alpha$} is defined by
\be \label{W alpha}
\W_\alpha(\mathbf n)=\{\theta\in \W_\mathbf n\,\,|\,\,\theta\cdot\alpha=0\}
\ee
A potential wall is a bona fide wall if 
there actually exists a strictly $\theta$-semistable representation $V\in\Rep(\ov Q, \mathbf n)$, with an $\alpha$-dimensional sub-representation $V' \subset V$, with $\slope_\theta(V')=\slope_\theta(V)(=0)$. By definition, the  {\it chambers of  $\W_\mathbf n$} are the connected components of the complement of the walls, in a given wall, and   {\it the maximal dimensional chambers of  $\W_\mathbf n$} are the connected components of the complement of the walls. The set of semistable representation does not vary in a given chamber \cite[Proposition 5.9, p. 679]{AS18}.

\subsection{The Ext-quiver} We now connect the preceding picture with Proposition \ref{et-kuran}.
Let us fix 
a (numerical, full) stability condition $\sigma _0=(\A_0, Z_0)\in\Stab^\dagger(\D)$.
\vskip 0.3 cm
We are working   under
the formality assumption for the Kuranishi family: Subsections \ref{kuran-fam}, \ref{formality-K}.

\vskip 0.3 cm

We fix a $\sigma_0$-polystable object in $F\in \A$: 
\be\label{effe}
F=\oplus_{i=1}^sF_i\otimes V_i
\ee
With $F_i$ a $\sigma_0$-stable object with phase equal to the phase of $F$:  $\phi(F_i)=\phi(F)\in (0,1]$, $i=1,\dots,s$.
We set 
\be\label{notationF}
\aligned
v=&v(F)\,,\quad v_i=v(F_i)\,,\qquad v=\sum_{i=1}^sn_iv_i\,,\\
n_i&=\dim V_i\,\,,\quad i=1,\dots, s\,,\qquad {\bf n}=(n_1,\dots,n_s)
\endaligned
\ee
$$
G:=G({\bf n}):=\prod_{i=1}^sGL(n_i)\cong 
\Aut(F)
$$
Exactly as in  \cite[Proposition 6.1]{AS18}, one proves that
there exist a quiver $Q=Q(F)$ and 
$G(\mathbf{n})$-equivariant isomorphisms
\[
\Rep(\overline Q, \mathbf{n}) \cong \Ext^1(F,F), \quad  \mathfrak{gl}(\mathbf{n})^\vee \cong \Ext^2(F,F),
\]
such that, via these isomorphisms, the quadratic part  of the Kuranishi map for $F$
\[
\mu=k_2: \Ext^1(F, F) \to \Ext^2(F,F),
\]
can be identified with the moment map:
\be \label{moment map}
\aligned
\mu_{\mathbf n}: \Rep(\ov Q, \mathbf n)& \longrightarrow \gl(\mathbf n)\cong \gl(\mathbf n)^\vee\\
(x,y^\vee) & \longmapsto\mu(x,y^\vee)=\sum_{e\in E}[x_e, y_e^\vee]
\endaligned
\ee
Thus:

\begin{cor}\label{iso-fi}
Proposition {\rm\ref{et-kuran} } gives  a local $G$-equivariant
isomorphism
$(U,y)\cong (\mu_{ \mathbf n}^{-1}(0), 0)$
where $[U/G]$ is a slice \'etale for the point $[F]\in \M_\sigma(v,\D)$, and $y$ is a point of $U$ mapping to $[F]$. Moreover {\rm(}see {\rm(}\ref{utov}{\rm))}, there is a $G$-invariant analytic isomorphism: 
\be\label{localGiso}
\phi: \U\to \V, \quad \text{with} \quad  \phi(F)=0
\ee
where $\U$, and $\V$ are  $G$-invariant saturated open analytic neighbourhood of $y$ in $U$ and, respectively, 
of $0$ in $\mu_{ \mathbf n}^{-1}(0)$.
\end{cor}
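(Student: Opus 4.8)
The plan is to read off this corollary by feeding the Ext-quiver dictionary recorded just above the statement into Proposition \ref{et-kuran}. First I would recall exactly what that proposition delivers: a $G$-equivariant pointed analytic isomorphism $\phi: (\U, y) \to (\V, 0)$ with $y \mapsto 0$, where $\U \subset U$ is a $G$-invariant saturated neighbourhood of $y$ (the point lying over $[F]$) and $\V \subset \mu^{-1}(0)$ is a $G$-invariant neighbourhood of $0$, with $\mu: \Ext^1(F,F) \to \Ext^2(F,F)_0$ the cup-product map. The essential observation is that all of the target data lives inside the single linear space $\Ext^1(F,F)$ and is cut out by the single quadratic map $\mu$, so it can be transported through any $G$-equivariant linear identification of $\Ext^1(F,F)$.

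Next I would transport everything through the two $G(\mathbf{n})$-equivariant linear isomorphisms $\Rep(\ov Q, \mathbf{n}) \cong \Ext^1(F,F)$ and $\gl(\mathbf{n})^\vee \cong \Ext^2(F,F)$ supplied by the quiver construction. The key compatibility, already recorded, is that under these identifications the cup-product map $\mu = k_2$ is carried to the moment map $\mu_{\mathbf{n}}$ of (\ref{moment map}). Consequently the quadric cone $\mu^{-1}(0) \subset \Ext^1(F,F)$ is identified, $G$-equivariantly and respecting the marked point $0$, with the affine scheme $\mu_{\mathbf{n}}^{-1}(0) \subset \Rep(\ov Q, \mathbf{n})$. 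Composing the isomorphism $\phi$ from Proposition \ref{et-kuran} with this identification sends $\V$ to a $G$-invariant neighbourhood of $0$ inside $\mu_{\mathbf{n}}^{-1}(0)$, which I simply relabel $\V$; this produces the desired $G$-invariant analytic isomorphism (\ref{localGiso}) with $\phi(F) = 0$. The first sentence of the corollary, the local $G$-equivariant isomorphism $(U, y) \cong (\mu_{\mathbf{n}}^{-1}(0), 0)$, is then just the germ (or formal, via $\widehat{\Def}(x)$) version of the same statement.

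Since both linear identifications and the isomorphism of Proposition \ref{et-kuran} are $G$-equivariant, equivariance of the composite is automatic, and since all three maps are pointed the basepoints match throughout. The only substantive point to verify, and the one I expect to be the real content hidden behind the word \emph{gives}, is the compatibility of $\mu$ with $\mu_{\mathbf{n}}$, that is the analogue of \cite[Proposition 6.1]{AS18}. That identification rests on choosing bases of the groups $\Ext^1(F_i, F_j)$ as the arrows of the quiver $Q = Q(F)$ and on Serre duality (the $2$-Calabi--Yau structure of $\D$) to match $\Ext^2(F,F)$ with $\gl(\mathbf{n})^\vee$, so that the Yoneda/cup pairing becomes the commutator expression $\sum_{e} [x_e, y_e^\vee]$; granting this, everything else is bookkeeping of the saturated, $G$-invariant neighbourhoods already produced in Proposition \ref{et-kuran}.
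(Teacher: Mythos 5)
Your proposal is correct and takes essentially the same route as the paper: the paper likewise obtains the corollary by composing the isomorphism of Proposition \ref{et-kuran} with the $G(\mathbf{n})$-equivariant identification of the cup-product map with the moment map, delegating that dictionary to \cite[Proposition 6.1]{AS18} exactly as you do, and offering no further argument beyond ``Thus.'' Your explicit transport-of-structure bookkeeping is just a spelled-out version of the same one-line deduction.
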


%%%%%%%%%%
\subsection{Walls and Chambers relevant to the $\sigma_0$-polystable point $[F]\in M_\sigma(v, \D)$}
${}$
We keep the notation (\ref{notationF}).
We consider the map:
$$
\Z: \Stab^\dagger(X)\longrightarrow K_{\num}(\D)^\vee\otimes\CC
$$

where, for $\sigma=(\A, Z)$,
$$
\Z(\sigma)(-)=\langle \Omega_Z, - \rangle \,,\qquad \text{i.e.}\quad Z(-)=\langle \Omega_Z, -\rangle
$$
This map  is \'etale over an open subset of $ K_{\num}(\D)^\vee\otimes\CC$.
Let
$$
\V\subset\Stab(X)
$$ 
be a small neighbourhood of 
$\sigma_0$ mapped  homeomorphically onto its image via 
$\Z$. In this neighbourhood a stability condition $\sigma=(\A, Z)$ is completely determined by the stability function $Z$, so that we will often  write "$Z\in \V$", instead of $\sigma\in \V$.
Let $v_1,\dots,v_s$ be as above. Consider the sublattice 
$$
\Lambda=\langle v_1,\dots,v_s\rangle\subset  K_{\num}(\D)
$$
generated by $v_1,\dots,v_s$. 

Define 
$$
\wt\Lambda:= \ZZ v_1\oplus\cdots\oplus\ZZ v_s
$$
and let
$$  
\eta:\wt\Lambda\longrightarrow \Lambda
$$
be the natural projection. 
Define
$$
\aligned
\gamma:&\quad\V
\longrightarrow \Hom(\wt\Lambda, \RR)\equiv\RR^s\\
\endaligned
$$
by
$$
\gamma(Z)\left(\sum_ia_iv_i\right)=\sum_ia_i\Im m\frac{Z(v_i)}{Z_0(v)}
$$
(Further down, we will write this sum as:
$\sum_ia_i\deg_{\sigma,v}(F_i)$).

Since $F_i$ has the same $Z_0$-phase as $F$, we have that $\Im m\left(\frac{{Z_0}(v_i)}{Z_0(v)}\right)=0$, so that
$$
\gamma(Z_0)\equiv(0,\dots,0)\in\RR^s
$$
We next restrict to $\V$ the subdivision into walls and chambers  of $\Stab(S)$, but we only look at the
{\it $v$-walls that are relevant to $F$}, i.e. those walls in $\W(v)$ whose points represent stability conditions making $F$ strictly semistable. We denote these walls  by the symbols
 $\H_{w_\alpha}(v)$, where
$$
w_\alpha=\sum_{i=1}^s\alpha_iv_i  \,\,\quad 0\leq\alpha_i\leq n_i\
$$
and where
$$
\H_{w_\alpha}(v)=\left\{ Z\in \V\,\,|\,\,\exists\,\, F'\subset F\,\,,\text{s.t.}\,\,\phi_Z(F')=\phi_Z(F)\,\,,v(F')=w_\alpha\right\}.
$$
This means that  $Z\in\H_{w_\alpha}(v)$, if and only if  there exists a sub-object $F'\subset F$, with $v(F')=w_\alpha$, making $F$ strictly semi-stable.
The equation for such a  wall is given by
$$
\H_{w_\alpha}(v)=\{ Z\in\Stab(X)\,\,|\,\,\gamma(Z)(w_\alpha)=0\}
$$
Next, by normalizing stability conditions,  we consider the {\it slice}:
$$
\mathcal S=\left\{Z\in \V\,\,|\,\,\gamma(Z)(v)=0\right\}
$$
The subdivison in walls and chambers (relative to $F$) is just the restriction to $\mathcal S$ of the corresponding subdivision 
of $
\V$.
We finally consider the map 
$$
\aligned
\Xi: \,\, &\mathcal S\longrightarrow \W_{\bf n}\otimes\RR={\bf n}^\perp\otimes \RR^s\\
&Z\mapsto \gamma(Z)
\endaligned
$$
In this map $Z_0$ is sent to the origin, and the walls of $\mathcal S$ relative to $F$ are sent
to the walls of the ext-quiver, namely:
$$
\Xi\,\,(\H_{w_\alpha}(v))=\W_\alpha(\bf n)
$$

%%%%%%%%%%%
%%%%%%%%%%%
%%%%%%%%%%%

\section{Normality and irreducibility}\label{norm}

Let $v\in K_{\num}(\D)$. From \cite[Definition 2.20]{BM-mmp14}, we recall that a wall $\W\subset\Stab^\dag(\D)$, relative to $v$, is called {\it totally semistable} if, for any $\sigma\in \W$, there are no $\sigma$-stable object,
i.e $M_\sigma^{st}(v,\D)=\emptyset$. 
\vskip 0.2 cm
An {\it irreducible component $M$ of $M_\sigma(v,\D)$ is said to be totally semistable if $M^{st}=\emptyset$.} Here $M^{st}$ is the open subset parametrizing stable objects

\vskip 0.2 cm

Our aim is to prove the following theorem:

\begin{theorem}\label{normM} Let $v\in K_{\num}(\D)$, with $v^2>0$,  and let $\sigma\in Stab^\dag(\D)$, not lying on a totally semistable wall. Then the moduli space $M_\sigma(v, \D)$ is normal.
\end{theorem}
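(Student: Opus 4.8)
The plan is to use the fact that normality is an analytic-local property, combined with the ext-quiver description of $M_\sigma(v,\D)$ developed above. First I would fix an arbitrary point $[F]\in M_\sigma(v,\D)$, corresponding to a $\sigma$-polystable object $F=\oplus_{i=1}^s F_i\otimes V_i$, with stabilizer $G=G(\mathbf n)=\prod_i\GL(n_i)$ and associated ext-quiver $Q=Q(F)$. By Corollary \ref{iso-fi}, an analytic neighbourhood of $[F]$ is isomorphic to an analytic neighbourhood of the origin in the Marsden--Weinstein reduction $\mathfrak M_0(\mathbf n)=\mu_{\mathbf n}^{-1}(0)\sslash G$. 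Since normality can be tested on local rings and is preserved by such analytic isomorphisms, it suffices to prove that each of these quiver varieties $\mathfrak M_0(\mathbf n)$ is normal at the origin; the theorem then follows point by point.

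Next I would translate the two hypotheses into the quiver language. The assumption $v^2>0$ gives, via (\ref{dn})--(\ref{dim-quiv}), $d(\mathbf n)={}^t\mathbf n D\,\mathbf n=v^2>0$, so that $2p(\mathbf n)=d(\mathbf n)+2=v^2+2$ and $\mathbf n$ is a positive (imaginary) root in the sense relevant to Crawley-Boevey's theory. The hypothesis that $\sigma$ does not lie on a totally semistable wall means $M^{st}_\sigma(v,\D)\neq\emptyset$; under the slice dictionary, the $\sigma$-stable objects of class $v$ that $S$-degenerate to $[F]$ correspond exactly to the \emph{simple} representations of the relevant preprojective algebra of dimension $\mathbf n$. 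Thus, at the maximally degenerate polystable points, the hypothesis is equivalent to $\mathbf n$ lying in Crawley-Boevey's set $\Sigma_0$ of dimension vectors of simples.

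With this dictionary in hand I would invoke the normality results for Marsden--Weinstein reductions of Crawley-Boevey \cite{C-B01}: when $\mathbf n\in\Sigma_0$ the fibre $\mu_{\mathbf n}^{-1}(0)$ is a reduced, irreducible and normal complete intersection, whence the GIT quotient $\mathfrak M_0(\mathbf n)=\mu_{\mathbf n}^{-1}(0)\sslash G$ is normal, because the ring of invariants of a normal domain under a reductive group is integrally closed. Alternatively, and in the spirit of the variation-of-GIT picture of Section \ref{VGIT}, I would pick a character $\chi_\theta$ with $\theta$ generic (off every wall $\W_\alpha(\mathbf n)$), so that $\theta$-semistability coincides with $\theta$-stability and $\mathfrak M_\theta(\mathbf n)$ is smooth; the affinization morphism $\mathfrak M_\theta(\mathbf n)\to\mathfrak M_0(\mathbf n)$ is then proper, birational (an isomorphism over the dense locus of simples), and satisfies $f_*\cO=\cO$, which forces the normal space $\mathfrak M_0(\mathbf n)$ to be the normalization of itself, hence normal.

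The step I expect to be the main obstacle is verifying that the hypothesis passes correctly to \emph{every} polystable point of $M_\sigma(v,\D)$, including the deepest strictly semistable strata, so that one never encounters a local dimension vector $\mathbf n\notin\Sigma_0$ at which the argument above breaks down. For such decomposable $\mathbf n$ one cannot argue through the existence of simples directly; instead I would reduce via Crawley-Boevey's canonical decomposition $\mathbf n=\sum_j m_j\sigma_j$ into roots $\sigma_j\in\Sigma_0$, under which $\mathfrak M_0(\mathbf n)$ acquires a product-and-symmetric-power structure built from the normal pieces $\mathfrak M_0(\sigma_j)$, so that normality is inherited (symmetric powers and products of normal varieties being normal). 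Making this reduction precise, and checking that it is exactly the avoidance of a totally semistable wall that keeps the decomposition non-degenerate, is where the real work lies.
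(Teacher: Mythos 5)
The first half of your proposal is the paper's own argument: via the $G$-equivariant local isomorphism of Corollary \ref{iso-fi}, normality at a polystable point $[F]$ reduces to normality of $\mu_{\mathbf n}^{-1}(0)\dslash G$, and when a simple representation of dimension $\mathbf n$ exists this follows from \cite[Theorem 1.2]{C-B01} (reducedness, complete intersection) combined with \cite[Theorem 1.1]{C-B03}. The genuine gap is in your second paragraph, where you assert that the hypothesis that $\sigma$ not lie on a totally semistable wall translates, through the slice dictionary, into ``$\mathbf n\in\Sigma_0$ at every polystable point.'' It does not. The hypothesis only gives $M^{st}_\sigma(v,\D)\neq\emptyset$ globally; to produce simple representations near the origin of $\mu_{\mathbf n}^{-1}(0)$ for the ext-quiver at a given $[F]$ you need $\sigma$-stable objects in every neighbourhood of $[F]$, i.e.\ you need to know that $[F]$ does not lie on an irreducible component of $M_\sigma(v,\D)$ containing no stable objects at all (a totally semistable component). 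Ruling out such components is the real content of the theorem and occupies most of the paper's proof (Claim \ref{totses}): one places $[F]$ in the maximal stratum of a hypothetical totally semistable component, uses \cite[Theorem 1.2]{C-B01} and the local model repeatedly to constrain the ext-quiver (at most one $v_i$ with $v_i^2>0$, and then $n_i=1$; $0\leq\langle v_i,v_j\rangle\leq 1$; genus zero of the graph after removing $w$), and finally exhibits a spherical class $s$ with $\langle v,s\rangle=-1$, contradicting Bayer--Macr\`i's numerical characterization of totally semistable walls \cite[Theorem 5.7]{BM-mmp14}. This lattice-theoretic bridge between the wall hypothesis and the quiver condition is entirely absent from your proposal; as written, your argument never uses the hypothesis in a way that could fail without it.

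Your fallback for the deep strata cannot repair this, because Crawley-Boevey's decomposition of Marsden--Weinstein reductions into products of symmetric powers is a statement about the \emph{reduced} quotient. It would only show that $(M_\sigma(v,\D))_{red}$ is normal --- which is true unconditionally, for every $\sigma$, as the paper records in Remark \ref{red-norm}, and is not where the hypothesis enters. The issue is reducedness of the natural scheme structure: when $\mathbf n\notin\Sigma_0$ there is no simple representation, \cite[Theorem 1.2]{C-B01} does not apply, and $\mu_{\mathbf n}^{-1}(0)$ --- hence its invariant-theoretic quotient, hence $M_\sigma(v,\D)$ at $[F]$ --- may be non-reduced; a non-reduced scheme is not normal no matter how nice its reduction is. Compare Remark \ref{tot-semist}, where the subscript ``red'' is exactly what cannot be removed on a totally semistable component. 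In short, an argument that handles the bad dimension vectors by the decomposition theorem would prove the theorem without its hypothesis, which is too much. (Smaller issues: your alternative VGIT route also breaks for divisible $\mathbf n$, since any $\theta\in\W_{\mathbf n}$ is orthogonal to sub-dimension vectors proportional to $\mathbf n$, so genericity cannot force semistable $=$ stable and the GIT quotient need not be smooth; and $f_*\cO=\cO$ for the map to the affine quotient is asserted, not proved.)
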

\proof Two main ingredients go into the proof of this theorem. The first one is the $G$-equivariant local isomorphism recalled in (\ref{localGiso}). The second is the combination of  Crawley-Boevey's \cite[Theorem 1.2]{C-B01}, and \cite[Theorem 1.1] {C-B03}.

\vskip 0.3 cm
Let $\ov{M^{st}_\sigma(v,\D)}$  be the closure of the open set  $M_\sigma^{st}(v,\D)\subset M_\sigma(v,\D)$ parametrizing $\sigma$-stable  objects. Let $x=[F]\in\ov{M^{st}_\sigma(v,\D)}$ and let $G=G_x$ be  the stabilizer of $F$.  Because of the $G$-equivariant isomorphism (\ref{localGiso}) in Corollary \ref{iso-fi}, it is enough to prove that $\mu_{ \mathbf n}^{-1}(0)\dslash G$ is normal. By   \cite[Theorem 1.1]{C-B03}, $\mu_{ \mathbf n}^{-1}(0)\dslash G$ with its reduced  structure
is normal. By Theorem 1.2 in \cite{C-B01}, $\mu_{ \mathbf n}^{-1}(0)$  is a reduced complete intersection as long as we can find a simple representation in $\mu_{ \mathbf n}^{-1}(0)$.
Since $y=[F]$ is in $\ov{M^{st}_\sigma(v,\D)}$, there are  $\sigma$-stable objects $F'$ that are parametrized by points near $y$ in $\U$. By Corollary \ref{stab-simp} below (whose proof is independent from the normality of $\U$ and $\V$), under the isomorphism 
$\phi$ given in  (\ref{localGiso}), the stable objects $F'$ correspond  to simple representations in $\V\subset\mu_{ \mathbf n}^{-1}(0)$.
It follows that $\ov{M^{st}_\sigma(v,\D)}$ is normal in $[F]$.
\vskip 0.2 cm
We are reduced to proving the following:
\begin{claim}\label{totses}If $\sigma$ is not  on a totally semistable wall for $v$, there is no totally semistable component of $M_\sigma(v,\D)$ .
\end{claim}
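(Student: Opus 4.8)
The plan is to prove the equivalent assertion $M_\sigma(v,\D)=\ov{M^{st}_\sigma(v,\D)}$: a totally semistable component is exactly an irreducible component that does not meet $M^{st}_\sigma(v,\D)$, so the statement amounts to saying that $\ov{M^{st}_\sigma(v,\D)}$ exhausts $M_\sigma(v,\D)$. Everything is read off the local model of Corollary \ref{iso-fi}. Near a polystable point $x=[F]$ the space $M_\sigma(v,\D)$ is analytically isomorphic to $\mu_{\mathbf n}^{-1}(0)\dslash G$ for the ext-quiver $Q(F)$ and its dimension vector $\mathbf n$, and by Corollary \ref{stab-simp} the $\sigma$-stable objects lying near $[F]$ correspond precisely to the simple representations contained in $\mu_{\mathbf n}^{-1}(0)$.

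First I would record the dichotomy coming from Crawley-Boevey. If $\mu_{\mathbf n}^{-1}(0)$ contains a simple representation, then by \cite[Theorem 1.2]{C-B01} it is a reduced, irreducible complete intersection, hence $\mathfrak M_0(\mathbf n)=\mu_{\mathbf n}^{-1}(0)\dslash G$ is irreducible and its open stable locus is dense. Thus $x\in\ov{M^{st}_\sigma(v,\D)}$ and, moreover, $M_\sigma(v,\D)$ has a unique irreducible component through $x$. Conversely, if $x\in\ov{M^{st}_\sigma(v,\D)}$, then stable objects, i.e. simple representations, accumulate at the origin of the local model, so $\mu_{\mathbf n}^{-1}(0)$ does contain a simple representation. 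Hence the closed set $\ov{M^{st}_\sigma(v,\D)}$ is exactly the locus of points whose ext-quiver dimension vector admits a simple representation, and $M_\sigma(v,\D)$ is locally irreducible along it.

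The second step separates a putative totally semistable component from $\ov{M^{st}_\sigma(v,\D)}$. Assume $M$ is a totally semistable irreducible component, so $M\cap M^{st}=\emptyset$. If $M$ met $\ov{M^{st}_\sigma(v,\D)}$ at a point $x_0$, then by the previous paragraph $\mu_{\mathbf n}^{-1}(0)$ would contain a simple representation at $x_0$, so $M$ would be the only component through $x_0$ and stable objects would be dense in every neighbourhood of $x_0$, hence would lie on $M$; this contradicts $M\cap M^{st}=\emptyset$. Therefore any totally semistable component is disjoint from $\ov{M^{st}_\sigma(v,\D)}$, and $M_\sigma(v,\D)$ decomposes as the disjoint union of the closed set $\ov{M^{st}_\sigma(v,\D)}$ and the totally semistable components.

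To finish I would invoke the connectedness of $M_\sigma(v,\D)$: since $\sigma$ does not lie on a totally semistable wall we have $M^{st}_\sigma(v,\D)\neq\emptyset$, so $\ov{M^{st}_\sigma(v,\D)}$ is a non-empty clopen subset, and a totally semistable component would be a second non-empty clopen piece, contradicting connectedness. The step I expect to be the main obstacle is precisely this input: the local analysis only shows that a totally semistable component would be a connected component disjoint from $\ov{M^{st}_\sigma(v,\D)}$, and ruling it out requires either citing connectedness of $M_\sigma(v,\D)$ (known for $v^2>0$ via deformation to the Gieseker/Hilbert-scheme chamber, with the analogous statement for $\Ku(X)$) or replacing it by a lattice argument. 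The lattice route would take the generic point $\oplus_j G_j^{\oplus m_j}$ of $M$, with $G_j$ $\sigma$-stable of class $w_j$ and $v=\sum_j m_j w_j$, observe that its dimension vector $\mathbf{m}=(m_j)$ admits no simple representation, and rewrite Crawley-Boevey's failing inequality $p(\mathbf{m})\le\sum_a p(\delta_a)$ over a decomposition $\mathbf{m}=\sum_a\delta_a$ as $v^2-\sum_a u_a^2\le 2(t-1)$, with $u_a=\sum_j\delta_{a,j}w_j$ classes of $\sigma$-semistable objects of the same phase; one then must show, using $v^2>0$, that such a configuration yields a spherical or isotropic class placing $\sigma$ on a totally semistable wall in the sense of \cite{BM-mmp14}, contrary to hypothesis. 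Making this implication precise is the delicate point.
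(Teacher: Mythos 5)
Your setup (local quiver model, the Crawley--Boevey dichotomy ``simple representation exists $\iff$ $\mu_{\mathbf n}^{-1}(0)$ is a reduced irreducible complete intersection,'' and the conclusion that a totally semistable component must be disjoint from $\ov{M^{st}_\sigma(v,\D)}$) coincides with the paper's framework, but neither of your two proposed ways to finish closes the argument. Route (a) is circular: connectedness of $M_\sigma(v,\D)$ when $\sigma$ lies \emph{on} a wall is not in the literature --- all the cited irreducibility/connectedness results (Toda, Bayer--Macr\`i, \cite{BLM${}^+$19}, \cite{LPZ20}, \cite{Sa23}) require $\sigma$ to be $v$-generic, and for $\sigma$ on a wall this is exactly Theorem \ref{irrM} of the paper, which is itself deduced from normality, which in turn rests on Claim \ref{totses}. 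The deformation-to-the-Gieseker-chamber argument only gives a morphism $M_{\sigma'}(v,\D)\to M_\sigma(v,\D)$ whose image contains $\ov{M^{st}_\sigma(v,\D)}$; surjectivity onto all of $M_\sigma(v,\D)$ is precisely what fails if a totally semistable component exists, so it cannot be used to rule one out.

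Route (b) is the paper's actual route, but you stop exactly where the real work begins, and the single application of Crawley--Boevey's inequality to the full dimension vector $\mathbf m$ that you propose is too weak: one failing decomposition $v^2-\sum_a u_a^2\le 2(t-1)$ does not by itself produce a spherical class $s$ with $\langle v,s\rangle<0$ or an isotropic $w$ with $\langle v,w\rangle=1$. What the paper does instead is choose $[F]=\oplus F_i^{n_i}$ in the \emph{maximal} (open) stratum of the putative component, so that (\ref{accorpare}) forbids stable deformations of \emph{every} partial sum $\oplus F_i^{m_i}$, $\mathbf m\le\mathbf n$. This licenses applying \cite[Theorem 1.2]{C-B01} to all sub-dimension vectors, not just to $\mathbf n$: pairwise it forces $0\le\langle v_i,v_j\rangle\le 1$ and at most one $v_i$ with $v_i^2>0$ (with $n_i=1$ for that one); isotropic summands are eliminated using condition (a) of \cite[Theorem 5.7]{BM-mmp14}; genus-zero estimates on the ext-graph (again via (\ref{dim-quiv}) and Crawley--Boevey) reduce to $v=w+\sum_i s_i$ with the ext-quiver a tree; and finally a leaf vertex $s_l$ attached to a single vertex $u$ gives $\langle v,s_l\rangle=s_l^2+\langle u,s_l\rangle=-1<0$, contradicting the hypothesis through condition (b) of \cite[Theorem 5.7]{BM-mmp14}. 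This pairwise-and-graph-theoretic refinement is the content of the proof, and it is exactly the step you flag as ``delicate'' and leave unproven.
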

\begin{rem}\label{red-norm}\rm{As we observed in \cite[Theorem 1.1]{C-B03},    \cite[p. 58, third paragraph]{BM-mmp14},
Crawley-Boevey proves that $(\mu_{ \mathbf n}^{-1}(0))_{red}$ is always normal.
It follows that $(M_\sigma(v,\D))_{red} $ is always normal, and in particular it is the disjoint  union 
of irreducible components.}
\end{rem}
Let us prove the Claim. Let $\W\in \Stab^\dag(\D)$ be a wall with respect to $v$.
Recall from \cite[Proposition 5.1]{BM-mmp14}, that one associates to $\W$ a primitive,
rank two, sublattice $\H\subset\K_{num}(\D)$, of signature equal to $(-1,1)$, defined by
$$
w\in \H\,\,\iff\ \,\, \forall \,\,\sigma=(Z,\P)\in\W, \quad Z(w) \,\,\text{is collinear with } \,Z(v)
$$
so that the $w$'s in  $\H$ are, potentially,  Mukai's vectors for  subobjects that are making 
objects with Mukai vector $v$ strictly $\sigma$-semistable for all $\sigma\in W$. When needed,
we will write $\H_v$, instead of $\H$, in order to highlight the dependence on $v$.
\cite[Theorem 5.7 ]{BM-mmp14} says that a wall $\W$, 
is totally semistable if and only if, either:
\vskip 0.2 cm
a) there exists a class $w\in \H$ which is isotropic  (i.e. $w^2=0$),  with $\langle v,w\rangle=1$, or:
\vskip 0.2 cm

b) there exists a  class $s\in \H$ which is  effective, and spherical (i.e. $s^2=-2$), with $\langle v,s\rangle<0$.

\vskip 0.2 cm

Let then $M'$ be a totally semistable  irreducible component of $M_\sigma(v,\D)$. This component, like any other, is stratified by locally closed strata whose open part is a locus of objects with fixed polystable type. By the irreducibility of $M'$ there is a maximal stratum, which  must be open. The maximal stratum is the image of an open subset of $\Pi_{i=1}^s\,\, S^{n_i}M_\sigma(v_i,\D)$ under the finite morphism
$$
\aligned
&\Pi_{i=1}^s\,\, S^{n_i}M_\sigma(v_i,\D)\longrightarrow M'\subset M_\sigma(v,\D)\\
\endaligned
$$
for some decomposition $v=\sum_iv_i$.
In this maximal  stratum,
take a point $[F]$  given by
$$
F=\overset{s}{\underset{i=1}\oplus}F_i^{n_i}\,,\,\,\,\,\,F_i\ncong F_j\,,\,\,\,\,\text{for}\,\,i\neq j\,,\,\,\,
\,\,\,\,v(F_i)=v_i\,,\,\,\,\,\,v=\sum_in_iv_i\,,\,\,\,\,\,\,{\bf{n}}=(n_1,\dots,n_s)
$$
The maximality of the stratum will play its role through the following obvious observation:

\be\label{accorpare}
\aligned
&\text{\parbox{.88\textwidth}{{ \it Let ${\bf{m}}=(m_1,\dots,m_s)$, and write ${\bf{m}} \leq{\bf{n}}$, if $m_i\leq n_i$, for all $i$. 
Then, for no ${\bf{m}}\leq{\bf{n}}$ the object
$F'=\overset{s}{\underset{i=1}\oplus}F_i^{m_i}$ has a stable deformation: otherwise $F$ would not be
on the maximal stratum. 
 }}}
 \endaligned
\ee
Following \cite[Definition 5.5 ]{BM-mmp14} we let $P_\H$ denote the positive cone of $\H$, which is generated by the integral classes $u\in \H$, with $u^2\geq0$, and $\langle u,v\rangle>0$.
We next observe that, in the decomposition $v=\sum_in_iv_i$,  there can not be a pair $v_i, v_j$, with $i\neq j$ such that:

1) $v_i^2\geq 2$, $v_j^2\geq 2$, 

nor a pair $v_i, v_j$  such that:

2) $\langle v_i, v_j \rangle\geq 2$, $i\neq j$.

In both cases we would get:
$$
(v_i+v_j)^2+2\,>\,(v_i^2+2)+(v_j^2+2)\,.
$$
To prove this inequality, regarding case 1), one  uses  \cite[Lemma 6.4]{BM-mmp14}, and case 2) is similar. 
Consider the ext-quiver associated to $F_i\oplus F_j$, and the corresponding moment map $\mu_{(1,1)}$
By  \cite[Theorem 1.2] {C-B01} , and recalling the dimension formula (\ref{dim-quiv}), the last inequality  says  that $\mu^{-1}_{(1,1)}(0)$
contains a simple representation. By our local correspondence between quiver varieties and moduli spaces we get that $F_i\oplus F_j\in\ov{M^{st}_\sigma(v_i+v_j, \D)}\neq\emptyset$, against (\ref{accorpare}).
In conclusion, in the expression $v=\sum_in_iv_i$ we have at most one $v_i$ with $v_i^2>0$,
and we also have $\langle v_i, v_j \rangle\leq 1$, for $i\neq j$. On the other hand, since the $F_i$'s are $\sigma$-stable, we must have $\langle v_i, v_j \rangle\geq0$, so that:
$$
0\leq\langle v_i, v_j \rangle\leq 1\,,\quad i\neq j\,.
$$  
Notice that, exactly in the same vein, i.e. using (\ref{accorpare}), for $F_i^{n_i}$ or for $F_i^{n_i}\oplus F_j^{n_j}$
one sees that 
\be\label{consec}
v_i^2>0\,\,\Rightarrow\,\, n_i=1\,,
\ee

Also observe that, for each $i$, the $\sigma$-stability of $F_i$ 
implies that  $\sigma$ does not lie on a totally semistable $v_i$-wall. 
Now suppose that 
 $v_{j}^2=0$, for some $j\neq i$. Then, for all $i$, we have  $\langle v_i,v_j \rangle=0$, otherwise there would be an index $i$ for which $\langle v_i,v_j \rangle=1$, and   \cite[Theorem 5.7]{BM-mmp14} would imply that $\sigma$ lies on a totally semistable $v_i$-wall. It follows that such a $v_j$ is an isolated vertex
 for the quiver graph $Q_F$. But then, always using  our local identification (\ref{localGiso}), we see that
 a neighbourhood  of $F$ in $M'$ would decompose into a product where one of the factors is isomorphic to an open neighbourhood of  $ [F_j^{n_j}]\in S^{n_j}M_\sigma(v_j,\D)$, which is normal (as $M_\sigma(v_j,\D)$ is a normal surface  since $v_j^2=0$). Thus  we may as well assume that: 
$$
v=aw+\sum_ic_is_i\,,\quad\,\,\,\,\,a\in\{0,1\}\,,\,\,w^2>0\,,\,\,c_i\geq0\,,\,\,s_i^2=-2
$$
We may also assume that $a=1$, otherwise $M'$ would reduce to a  point.
We may also assume that at least one of the $c_i$'s is not equal to zero, otherwise
$M'=\ov{M_\sigma(w,\D)}$, which is normal.
Let $Q_1,\dots, Q_k$ be the connected components of the graph, obtained from the ext-graph $Q_F$ of $F$,
by taking out the vertex $w$. We claim that the genus of  each $Q_h$ is zero. With abuse of notation,
let $s_1,\dots,s_t$ be the vertices of $Q_h$.
We have:
$$
\left(\sum_{i=1}^t s_i\right)^2=\sum_{i=1}^t s_i^2+2\sum_{i<j}\langle s_i, s_j\rangle=2g(Q_h)-2
$$
so that, if $g(Q_h)\geq1$ we get:
$$
\left(\sum_{i=1}^t s_i\right)^2+2>\sum_{i=1}^t( s_i^2+2)
$$
By Crawley-Boevey's   \cite[Theorem1.2]{C-B01}, and our local identification (\ref{localGiso}),
this contradicts (\ref{accorpare}). Next observe that we may assume that the ext-graph is connected. In fact, if we had $Q=\sqcup \Q_i$,
then $\mu^{-1}_Q(0)\cong\prod \mu^{-1}_{\Q_i}(0)$ and the local
isomorphism yields a corresponding product decomposition of a neighbourhood of $[F]\in M_\sigma(v,\D)$ and the reasoning would be component-wise.
Assuming then that the ext-graph is connected, each $Q_h$
is connected to $w$, and considering the subgraph $Q_h\cup\{w\}$, we can see, with the same method used above, that also this graph has genus zero. Using (\ref{consec}) we get:
\be\label{tss}
v=w+\sum_is_i
\ee

Now take  a vertex $s_l$ in this graph which is connected just to another  vertex $u$.
The vertex $u$ can be equal to $w$ or to one of the $s_j$'s. 
Then
$$
\langle v, s_i \rangle=s_i^2+\langle u, s_i \rangle=-1
$$
By point b) above we conclude that $\sigma$ sits in a totally semistable wall.
\endproof
\begin{rem} \label{tot-semist}From the proof above and from formula (\ref{tss}),we see that if $M'$ is a totally semistable component of $M_\sigma(v,\D)$, then
$$
(M')_{red}\cong M_\sigma(w,\D)\times\prod_i M_\sigma(s_i,\D)
$$
\end{rem}

\vskip 0.3 cm

We are now in the position to prove the following theorem.

\begin{theorem} \label{irrM}Let $v\in K_{\num}(\D)$, with $v^2>0$,  and let $\sigma\in Stab^\dag(\D)$, not lying on a totally semistable wall. Then the moduli space $M_\sigma(v, \D)$ is irreducible.
\end{theorem}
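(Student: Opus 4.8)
The plan is to deduce irreducibility from the normality already established in Theorem \ref{normM}, together with the structural information gathered in its proof. First I would recall that a normal (in particular reduced) scheme is the disjoint union of its irreducible components: if two components met at a point $x$, the local ring $\cO_x$ would have two distinct minimal primes and hence fail to be a domain, contradicting normality. Thus, by Theorem \ref{normM}, $M_\sigma(v,\D)=\bigsqcup_{j=1}^r M_j$ with each $M_j$ integral and normal. By Claim \ref{totses} no $M_j$ is totally semistable, so the open stable locus $M_j\cap M_\sigma^{st}(v,\D)$ is non-empty, and being open in the irreducible $M_j$ it is dense in $M_j$. Consequently $M_\sigma^{st}(v,\D)=\bigsqcup_{j=1}^r\bigl(M_j\cap M_\sigma^{st}(v,\D)\bigr)$ is a disjoint union of $r$ non-empty open pieces, and the whole problem reduces to showing that the stable locus $M_\sigma^{st}(v,\D)$ is connected.

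To prove this I would compare $\sigma$ with a $v$-generic stability condition. Since $\sigma$ lies on only finitely many $v$-walls, I can choose $\sigma_0\in\Stab^\dagger(\D)$ in an adjacent chamber, arbitrarily close to $\sigma$, so that $\sigma_0$ is $v$-generic and $M_{\sigma_0}(v,\D)=M_{\sigma_0}^{st}(v,\D)$ is a smooth, projective, irreducible holomorphic symplectic variety (the irreducibility for $v$-generic stability conditions being provided by \cite[Theorem 3.10]{BM-mmp14} in the K3 case and \cite[Corollary 2.4]{Sa23} in the Kuznetsov case). Because stability is an open condition on $\Stab^\dagger(\D)$, every $\sigma$-stable object of class $v$ remains $\sigma_0$-stable once $\sigma_0$ is close enough to $\sigma$; passing to families, this identifies $M_\sigma^{st}(v,\D)$ with an open subscheme of $M_{\sigma_0}(v,\D)$. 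As a non-empty open subset of an irreducible variety, $M_\sigma^{st}(v,\D)$ is therefore irreducible, hence connected. Its non-emptiness is guaranteed precisely by the hypothesis that $\sigma$ does not lie on a totally semistable wall.

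Combining the two steps finishes the argument: the connected set $M_\sigma^{st}(v,\D)$ cannot be written as a disjoint union of more than one non-empty open subset, so in $M_\sigma^{st}(v,\D)=\bigsqcup_{j=1}^r\bigl(M_j\cap M_\sigma^{st}(v,\D)\bigr)$ we must have $r=1$, and $M_\sigma(v,\D)$ is irreducible. The step I expect to be the main obstacle is the comparison in the second paragraph: one must check carefully that $\sigma$-stability spreads out uniformly in families, so that the set-theoretic inclusion $M_\sigma^{st}(v,\D)\subset M_{\sigma_0}(v,\D)$ is genuinely an open immersion of schemes, and that the cited irreducibility of the $v$-generic moduli space applies verbatim in both the K3 and the cubic-fourfold settings. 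An alternative, more self-contained route would bypass the generic comparison and instead establish connectedness of $M_\sigma^{st}(v,\D)$ directly from the local quiver model of Corollary \ref{iso-fi}, using that the simple-representation loci of the connected Ext-quivers patch together; but the wall-crossing comparison is shorter and reuses results already quoted in the introduction.
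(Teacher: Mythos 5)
Your proposal is correct, and it rests on the same three pillars as the paper's proof: Theorem \ref{normM} (normality, so $M_\sigma(v,\D)$ is the disjoint union of its irreducible components), Claim \ref{totses} (no component is totally semistable, so each contains $\sigma$-stable points), and the known irreducibility of the moduli space for a nearby $v$-generic stability condition. The difference is the direction in which you cross the wall. The paper uses that every $\sigma'$-semistable object ($\sigma'$ $v$-generic, close to $\sigma$) is $\sigma$-semistable, which yields a morphism $M_{\sigma'}(v,\D)\to M_\sigma(v,\D)$; its image is closed (properness of the source) and contains $M^{st}_\sigma(v,\D)$ (openness of stability), hence equals $\ov{M^{st}_\sigma(v,\D)}=M_\sigma(v,\D)$, and irreducibility follows because the continuous image of an irreducible space is irreducible. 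You use the opposite implication --- $\sigma$-stable implies $\sigma_0$-stable --- to realize $M^{st}_\sigma(v,\D)$ as an open subset of the irreducible $M_{\sigma_0}(v,\D)$, and conclude by connectedness. Both routes consume the same wall-crossing inputs, but the paper's surjection argument never needs the scheme-theoretic statement you rightly flag as the delicate point: a surjective continuous map suffices, whereas your open immersion requires checking that the $\sigma$-stable locus is a \emph{saturated} open substack of $\M_{\sigma_0}(v,\D)$ (this does hold: $\sigma$-stable objects are $\sigma_0$-stable, and the $S$-equivalence class of a stable object is a singleton, so one can invoke Alper's result that good moduli spaces restrict to saturated opens). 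Two small corrections to your write-up: for non-primitive $v$ (allowed here, since only $v^2>0$ is assumed) the $v$-generic moduli space is neither smooth nor equal to its stable locus --- it is merely irreducible, which is all your argument actually uses; and the references for $v$-generic irreducibility are \cite{Tod08}, \cite{BM-pr14}, \cite{Yos01} for a K3 with $v$ primitive, \cite{BLM${}^+$19} and \cite{LPZ20} for the Kuznetsov component, and \cite{Sa23} for non-primitive $v$, rather than the normality statements of \cite{BM-mmp14} and \cite{Sa23} that you cite.
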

\proof 
When the Mukai vector $v$ is primitive and $\sigma$ is $v$-generic, 
the irreducibility of moduli spaces of Bridgeland stable objects on a K3 surface has been established by \cite{Tod08}, \cite{BM-pr14}, \cite{Yos01}. Under the same hypotheses for $v$ and $\sigma$, the irreducibility is proved in \cite{BLM${}^+$19}, when $\D=\Ku(X)$, where $X$ is a cubic fourfold, and in \cite{LPZ20}, when $v$ is of OG10-type. The remaining cases ($v$ non-primitive, $\sigma$ $v$-generic and $\D$ coming from a K3 or from a Kuznetsov component) are proved in \cite{Sa23}.
Here $v$ is arbitrary and  we are no longer assuming that $\sigma$
be  $v$-generic, but only that it does not lie on a totally semistable wall.
By the previous theorem we know that $M_\sigma(v, \D)$ is normal and it is therefore the disjoint union of its irreducible components. Choose a $v$-generic stability
condition  $\sigma'$ close to $\sigma$. Then $M_{\sigma'}(v, \D)$ is irreducible and surjects onto $\ov M^{st}_\sigma(v, \D)$, proving the Theorem.
\endproof

%%%%%%%%%%%
%%%%%%%%%%%
%%%%%%%%%%%

\vskip 0.3 cm
\section{Variation of GIT}\label{VGIT}

We fix a Mukai vector $v$, and  a Bridgeland stability
condition $\sigma_0=(\A_0, Z_0)\in \Stab^\dagger(\D)$, not lying on a totally semistable wall, so that $\M_{\sigma_0}(v,\D)$ is normal , as we proved in the preceding section.
As in  (\ref{ell}), we  consider the cohomology class 
$\ell_0:=\ell_{\sigma_0}\in H^2(\M_{\sigma_0}(v,\D),\RR)$, and we recall the fundamental equality 
(\ref{git-slope}). For brevity we  write $\M_{\sigma_0}(v,\D)$ instead of $\M^{ss_{\ell_0}}_v(\A)$ and
we will use, intercheangeably,  the expressions {\it $\sigma_0$-semistable}, and {\it $\ell_0$-semistable}.
Next we choose a  stability condition $\sigma=(\A_0, Z)$,  very close to $\sigma_0$, (so that 
$\M_{\sigma}(v,\D)$ is  normal),
and we set $\ell=\ell_\sigma$. Consider a slice \'etale
$$
\epsilon': [U/G]\longrightarrow \M_{\sigma_0}(v,\D)
$$
at a point $x=[F]\in \M_{\sigma_0}(v,\D)$, where $G=G_x$.
For $u\in U$, let $\ov u$ denote  the image of $u$ in $[U/G]$.
Set
\be\label{usigma}
U^\sigma=\{ u\in U\,\,|\,\,\epsilon'(\ov u)\,\,\text{is}\,\,\sigma\text{-semistable} \}
\ee
We  have 
a commutative diagram, where the outer square is given by the slice \'etale 
$$
\xymatrix{[U/G]\ar[ddd]_{\rho_0}\ar[rrr]^{\epsilon'}&&&\M_{\sigma_0}(v,\D)\ar[ddd]^{h_0}\\
&[U^\sigma/G]\ar[ddl]_{\rho'}\ar[r]^{\epsilon''}\ar[ul]_\tau\ar[d]_p&\M_\sigma(v,\D)\ar[ddr]^{h'}\ar[d]^\pi\ar[ur]^\eta\\
&U^\sigma\dslash G\ar[r]^\alpha\ar[dl]_\rho&M_\sigma(v,\D)\ar[dr]^h\\
U\dslash G\ar[rrr]^\epsilon&&&M_{\sigma_0}(v,\D)
}
$$
Also set:
$$
\rho'=\rho\circ p\,,\quad h'=h\circ\pi
$$
\begin{claim}\label{cart-trap} In the diagram above the bottom trapeze is cartesian.
\end{claim}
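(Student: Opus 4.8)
The plan is to realize the bottom trapeze as the image, under the good moduli space functor, of a cartesian square of stacks, and then to descend cartesianness using the stability of good moduli spaces under base change. First I would record the two inputs. Since $\sigma=(\A_0,Z)$ and $\sigma_0=(\A_0,Z_0)$ share the heart $\A_0$, both $\M_\sigma(v,\D)$ and $\M_{\sigma_0}(v,\D)$ are open substacks of $\M_v(\A_0)$, and for $\sigma$ sufficiently close to $\sigma_0$ every $\sigma$-semistable object is $\sigma_0$-semistable (by continuity of phases across the wall); hence $\eta:\M_\sigma(v,\D)\hookrightarrow\M_{\sigma_0}(v,\D)$ is an open immersion. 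By the very definition (\ref{usigma}), the $G$-invariant open subset $U^\sigma\subset U$ is then precisely the $\epsilon'$-preimage of $\M_\sigma(v,\D)$, so that $[U^\sigma/G]=(\epsilon')^{-1}\bigl(\M_\sigma(v,\D)\bigr)$.

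Next I would invoke the slice \'etale theorem (\ref{sl-et})(b), which makes the outer square of the diagram cartesian with $\epsilon$ \'etale, i.e.
$$
[U/G]\ \cong\ \M_{\sigma_0}(v,\D)\times_{M_{\sigma_0}(v,\D)}(U\dslash G),
$$
where $\epsilon'$ is the first projection. Pulling back the open immersion $\eta$ along this first projection and using the previous paragraph identifies the open substack $[U^\sigma/G]$ with the fibre product $\M_\sigma(v,\D)\times_{M_{\sigma_0}(v,\D)}(U\dslash G)$.

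It then remains to pass to good moduli spaces. Setting $P:=M_\sigma(v,\D)\times_{M_{\sigma_0}(v,\D)}(U\dslash G)$ and using $h'=h\circ\pi$, the previous identification factors as
$$
[U^\sigma/G]\ \cong\ \M_\sigma(v,\D)\times_{M_{\sigma_0}(v,\D)}(U\dslash G)\ \cong\ \M_\sigma(v,\D)\times_{M_\sigma(v,\D)}P.
$$
Because $\epsilon$ is \'etale, the projection $P\to M_\sigma(v,\D)$ is an \'etale morphism of algebraic spaces, so the stability of good moduli spaces under base change \cite[Prop.\ 4.7]{Alp13}, applied to the good moduli space $\pi:\M_\sigma(v,\D)\to M_\sigma(v,\D)$, shows that $\M_\sigma(v,\D)\times_{M_\sigma(v,\D)}P\to P$ is again a good moduli space. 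On the other hand $p:[U^\sigma/G]\to U^\sigma\dslash G$ is a good moduli space by construction. By uniqueness of good moduli spaces, the natural comparison map $(\alpha,\rho):U^\sigma\dslash G\to P$ is an isomorphism, which is exactly the statement that the bottom trapeze is cartesian.

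The step I expect to be the crux is the descent from the cartesian square of stacks to the cartesian square of algebraic spaces: it is essential here that $\epsilon$ (and hence $P\to M_\sigma(v,\D)$) be \'etale, so that \cite[Prop.\ 4.7]{Alp13} applies and the base-changed good moduli space is identified with $U^\sigma\dslash G$ on the nose. Everything else is bookkeeping: the identification $[U^\sigma/G]=(\epsilon')^{-1}(\M_\sigma(v,\D))$ is immediate from (\ref{usigma}), and the cartesianness of the outer square is supplied verbatim by the \'etale slice theorem \cite{AHR20}.
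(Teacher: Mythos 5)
Your proof is correct, but it descends from stacks to moduli spaces by a genuinely different mechanism than the paper's. The two arguments share their first half: the top trapeze is cartesian by the definition (\ref{usigma}) of $U^\sigma$, the outer square is cartesian by the \'etale slice theorem (\ref{sl-et}) of \cite{AHR20}, and together these give the stack-level identification $[U^\sigma/G]\cong \M_\sigma(v,\D)\times_{M_{\sigma_0}(v,\D)}(U\dslash G)$. From there the paper works at the level of points: it invokes the normality of $M_\sigma(v,\D)$ and $U\dslash G$ (hence Theorem \ref{normM}) to reduce cartesianness of the bottom trapeze to bijectivity of $\alpha$ on fibres, proves that the \emph{inner} square is cartesian by appealing to \cite{AHLH22} via the $\Theta$-surjectivity of $\epsilon''$, and finishes with a point chase through the diagram. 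You replace all of this by Alper's base change theorem applied to the good moduli space $\pi\colon\M_\sigma(v,\D)\to M_\sigma(v,\D)$ along $P\to M_\sigma(v,\D)$, followed by uniqueness (universality) of good moduli spaces. Your route buys two things: it is independent of normality, so the Claim comes out of it without any appeal to Theorem \ref{normM}, and it produces as byproducts both the existence of the good moduli space $U^\sigma\dslash G\,(\cong P)$ and the cartesianness of the inner square, neither of which has to be assumed or established separately; the paper's route, on the other hand, keeps the $\Theta$-surjectivity formalism of \cite{AHLH22} in the foreground, in line with the expository aims of the survey. Two small inaccuracies in your write-up, neither of which is a gap: \'etaleness of $\epsilon$ is not what makes \cite[Proposition 4.7]{Alp13} applicable --- that result holds for arbitrary (not necessarily flat) base change along morphisms of algebraic spaces, so \'etaleness is merely harmless here; and ``$p$ is a good moduli space by construction'' should really be read as the definition of the symbol $U^\sigma\dslash G$ as \emph{the} good moduli space of $[U^\sigma/G]$, whose existence is exactly what your base-change step supplies.
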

\proof As $M_\sigma(v,\D)$,  and $U\dslash G$ are both normal, it suffices to prove that $\alpha$ is bijective or, equivalently, that $\alpha$ establishes  a bijection between $\rho^{-1}(z)$ and  $h^{-1}((\epsilon (z))$, for every $z\in U\dslash G$ . The outer square and the top trapeze are both cartesian and $\eta$ is an open immersion.  The inner square is cartesian too, by virtue of  \cite{AHLH22}, as soon as we realize that $\epsilon''$ is $\Theta$-surjective, which it is since $\epsilon'$ is $\Theta$-surjective, and the top trapeze is cartesian. Now $\epsilon''$ establishes  a bijection between
$\tau^{-1}\rho_0^{-1}(z)=\rho^{-1}p^{-1}(z)$ and $h_0^{-1}(\epsilon(z))\cap\M_\sigma(v)=\pi^{-1}h^{-1}(\epsilon(z))$, but then $\alpha$ establishes the desired bijection.
\endproof
\begin{rem}It is worth observing that the proof of this Claim is much shorter than the proof of the analogous   \cite[Proposition 7.2]{AS18}.
\end{rem}

\vskip 0.3 cm

Look at the locus $U^\sigma$ defined in (\ref{usigma}). Pulling back $\ell_\sigma$ form $M_\sigma(v)$ to  $U$
 gives a $G$-linearized line bundle $\mathcal L_\sigma$ on $U$. The fundamental comparison result given in 
 (\ref{bridg-git}) tells us  that
  $$
  U^\sigma=U^{s_{\mathcal L_\sigma}}=\{z\in U\,\,|\,\,z\,\,\text{is $\Theta$-semistable with respect to the class $\ell_\sigma$ }\,\}
  $$
  \vskip 0.3 cm
 Let us now recall a few facts from   \cite[Section 5.1]{KKV13}.  Let $G$ be a connected, reductive algebraic group.
 Denote by $\X(G)$ the group of characters of $G$.  If $V$ is an irreducible variety acted on by $G$, denote by $\Pic_G(V)$  the group of $G$-linearized line bundles on $V$.  By \cite[Proposition 1.3, Lemma 2.2 and Proposition 2.3]{KKV13} 
there is a surjection $\X(G)\twoheadrightarrow H^1_{alg}(G,\cO(V)^*)$ induced by the injection $\CC^*\hookrightarrow\cO(V)^*$, and a natural injection
 $H^1_{alg}(G,\cO(V)^*)\hookrightarrow\Pic_G(V)$. In particular to every character $\chi\in \X(G)$ one associates a
 $G$-linearized line bundle on $\mathcal L_\chi$ on $V$. Moreover  there is an exact sequence 
\be\label{picG}
 1\longrightarrow \Pic(V\sslash G)\overset{\pi^*}\longrightarrow\Pic_G(V)\overset\delta\longrightarrow \prod_{x\in\C}\X(G_x)
\ee
 where the index set $\C\subset V$ is a set of representatives of the closed orbits in $V$, and the map $\delta$ is defined by associating to a line bundle $\mathcal L $ the characters of the isotropy groups on the fibers $\mathcal L_x$, for $x\in\C$.
  \vskip 0.3 cm
  Let us go back to (\ref{effe}), (\ref{notationF}) and to the slice \'etale $U$ at $[F]\in M_\sigma(v)$.
  Consider the character $\chi_\sigma\in \X(G)$ defined by
  $$
  \chi_\sigma((g_1,\dots,g_s))=\prod_{j=1}^s\det(g_j)^{-\R e(Z(v_j))}
  $$
  \begin{prop} $\delta(\mathcal L_\sigma)= \delta(\mathcal L_{\chi_\sigma})$
  \end{prop}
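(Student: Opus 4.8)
The plan is to read off both sides from the map $\delta$ of the exact sequence (\ref{picG}), which assigns to a $G$-linearized line bundle, at each point $x$ of a set $\C$ of representatives of the closed $G$-orbits in $U$, the character by which the stabilizer $G_x$ acts on the fibre $\mathcal L_x$. Since each $G_x$ is a product of general linear groups, hence reductive, two of its characters agree as soon as they pair identically with every one-parameter subgroup $\lambda:\GG_m\to G_x$; indeed a character, being a homomorphism into the abelian $\GG_m$, is determined by its restriction to a maximal torus. I would therefore fix such an $x$, corresponding to a $\sigma_0$-polystable object $F'$, fix a $\lambda\in G_x$, and reduce the proposition to the single numerical identity asserting that $\GG_m$ acts with the same weight on $(\mathcal L_\sigma)_x$ and on $(\mathcal L_{\chi_\sigma})_x$.

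The right-hand side is immediate. As $\mathcal L_{\chi_\sigma}$ is the line bundle attached to the character $\chi_\sigma$ of the ambient group $G$, its stabilizer acts on the fibre through the restriction $\chi_\sigma|_{G_x}$, so the weight of $\lambda$ is $\langle\chi_\sigma,\lambda\rangle$. Writing $V_i=\bigoplus_w (V_i)_w$ for the decomposition of the multiplicity space $V_i$ into $\lambda$-weight spaces and recalling $\chi_\sigma(g)=\prod_j\det(g_j)^{-\R e(Z(v_j))}$, this weight is
\[
\langle\chi_\sigma,\lambda\rangle=-\sum_i \R e(Z(v_i))\sum_w w\,\dim (V_i)_w=-\sum_w w\sum_i \dim(V_i)_w\,\R e(Z(v_i)).
\]

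For the left-hand side I would use that a one-parameter subgroup $\lambda\in G_x$ is precisely a filtration $f:\Theta\to[U/G]$ with $f(1)=x=f(0)$, and that the sought weight of $\GG_m$ on $(\mathcal L_\sigma)_x$ is then the integer $f^*\mathcal L_\sigma$, the action on the fibre over the fixed point. Since $\mathcal L_\sigma$ is pulled back from the Bayer-Macr\`i class $\ell_\sigma$, the weight computation of Subsection \ref{mod-stack-git} gives $f^*\mathcal L_\sigma=-\sum_w w\,\R e(Z(\gr_w F'^{\bullet}))$, the sum over the $\lambda$-weight graded pieces. The two expressions then coincide once one identifies the Mukai vector of the weight-$w$ graded piece: because $\lambda$ acts solely on the multiplicity spaces and commutes with the quiver maps, $\gr_w F'^{\bullet}$ is assembled from the vertices $F_i$ with multiplicities $\dim(V_i)_w$, so that $v(\gr_w)=\sum_i \dim(V_i)_w\,v_i$ and hence $\R e(Z(\gr_w))=\sum_i \dim(V_i)_w\,\R e(Z(v_i))$ by additivity of $Z$. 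Substituting this in shows $f^*\mathcal L_\sigma=\langle\chi_\sigma,\lambda\rangle$ for every $x\in\C$ and every $\lambda$, whence $\delta(\mathcal L_\sigma)=\delta(\mathcal L_{\chi_\sigma})$.

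I expect the only genuine subtlety to be the last identification $v(\gr_w)=\sum_i \dim(V_i)_w\,v_i$: it rests on the precise dictionary, furnished by the ext-quiver description of Corollary \ref{iso-fi}, between dimension vectors of graded pieces of the quiver representation and Mukai vectors of the corresponding graded objects. One must also keep the normalization $Z(v)=i$ fixed throughout and record that $\R e(Z(v_j))\in\QQ$, so that $\chi_\sigma$ is a genuine (rational) character and the pairing with $\lambda$ lands in the same group as $f^*\mathcal L_\sigma$; with the sign conventions of Subsection \ref{mod-stack-git} the two weights then match on the nose.
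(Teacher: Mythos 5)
Your argument is correct, and in substance it is the argument the paper has in mind: the paper's entire proof is a pointer to \cite[Lemma 7.17]{AS18}, and that lemma is proved by exactly the scheme you follow --- evaluate $\delta$ at each closed-orbit point $x\in\C$ by pairing both characters of $G_x$ with one-parameter subgroups and matching the resulting weights. Where you genuinely depart from \cite{AS18} is in how the $\mathcal L_\sigma$-weight is computed: there, in the Gieseker setting, the weight on the fiber of the determinant line bundle is extracted from Hilbert polynomials via the classical bridge formula (the analogue of (\ref{mu})), whereas you work natively in the $\Theta$-formalism of Subsection \ref{mod-stack-git}, reading off the weight of $\lambda$ on $(\mathcal L_\sigma)_x$ as $f^*\ell_\sigma=-\sum_w w\,\R e\,(Z(\gr_w))$ for the split filtration $f:\Theta\to[U/G]$ determined by $\lambda$. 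This buys a proof that is self-contained in the present paper, works verbatim for the Kuznetsov component, and yields the clean intermediate statement that at a polystable point $F'=\oplus_j F'_j\otimes W_j$ the character $\delta(\mathcal L_\sigma)_x$ of $G_x=\prod_j\GL(W_j)$ is $(g_j)\mapsto\prod_j\det(g_j)^{-\R e (Z(v(F'_j)))}$, with no reference to the central point.

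The one step to watch is the final identification $v(\gr_w)=\sum_i\dim(V_i)_w\,v_i$ --- equivalently, the branching law $V_i\cong\oplus_j W_j^{\oplus m_{ij}}$ as $G_x$-modules together with $v(F'_j)=\sum_i m_{ij}v_i$. You justify it by the ext-quiver dictionary of Corollary \ref{iso-fi}, but that dictionary is only available on the saturated analytic neighbourhood $\U$ of the central orbit, while the set $\C$ in (\ref{picG}) runs over \emph{all} closed orbits of the affine slice $U$; an analytic neighbourhood contains no Zariski-open subset, so one cannot simply shrink $U$ to reduce to $\U$. The same limitation is implicitly present in \cite{AS18}, and it is harmless for the use the paper makes of the proposition, since everything in Section \ref{VGIT} is local around $[F]$. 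If you want the statement for every closed orbit of $U$, observe that a one-parameter subgroup $\lambda\subset G_x\subset G=G_y$ fixes both $x$ and the central point $y$; over the $\lambda$-fixed locus the $G$-equivariant universal family splits $\GG_m$-equivariantly into weight pieces whose fiberwise numerical classes are locally constant, so whenever $x$ lies in the same component of that fixed locus as $y$, the identity propagates from the central point, where it is immediate; in general one records, as the paper implicitly does, that the proposition is being applied on a saturated neighbourhood of the central orbit. With that proviso, and the sign conventions you already checked against Subsection \ref{mod-stack-git}, your proof is complete.
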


  \proof   The proof follows exactly the same steps in the proof of \cite[Lemma 7.17]{AS18}.
  \endproof
  
From the exact sequence (\ref{picG}), it follows that $(\mathcal L_{\chi_\sigma})(\mathcal L_{\sigma})^{-1}\in 
 \Pic(V\sslash G)$, and as a consequence we have the following result.
 
  \begin{cor}
  $U^{s_{\mathcal L_\sigma}}=U^{s_{{\chi_\sigma}}}(=U^\sigma)$ and $\,\,U^{s_{\mathcal L_\sigma}}\sslash G=U\sslash_{\chi_{\sigma}}G$.
  \end{cor}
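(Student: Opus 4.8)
The plan is to deduce both equalities from the preceding Proposition together with the exact sequence (\ref{picG}), by showing that the two $G$-linearizations $\mathcal L_\sigma$ and $\mathcal L_{\chi_\sigma}$ define the very same GIT problem on the affine slice $U=\spec A$. Write $\pi\colon U\to U\sslash G=\spec(A^G)$ for the affine quotient. The key reduction is the identity $\mathcal L_\sigma=\mathcal L_{\chi_\sigma}\otimes\pi^*N$ for a suitable line bundle $N$ on $U\sslash G$, after which one invokes the standard principle that tensoring a $G$-linearization by the pullback of a line bundle from the quotient alters neither the semistable locus nor the resulting GIT quotient.

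First I would establish this reduction. Since the map $\delta$ in (\ref{picG}) is a homomorphism of abelian groups, the Proposition $\delta(\mathcal L_\sigma)=\delta(\mathcal L_{\chi_\sigma})$ yields $\delta(\mathcal L_\sigma\otimes\mathcal L_{\chi_\sigma}^{-1})=0$. By exactness of (\ref{picG}) at $\Pic_G(U)$ the class $\mathcal L_\sigma\otimes\mathcal L_{\chi_\sigma}^{-1}$ lies in the image of $\pi^*$, so there is a line bundle $N\in\Pic(U\sslash G)$ with $\mathcal L_\sigma=\mathcal L_{\chi_\sigma}\otimes\pi^*N$.

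It remains to see that this twist is harmless, which is the one substantive step. Set $Y_0=U\sslash G=\spec(A^G)$ and, for $n\ge 0$, $\mathcal F_n:=(\pi_*\mathcal L_{\chi_\sigma}^{\otimes n})^G$, a coherent sheaf on the affine scheme $Y_0$. Because $G$ acts trivially on the fibres of $\pi^*N$ (as $N$ lives on the quotient), the projection formula and the exactness of the invariants functor give a canonical isomorphism $H^0(U,\mathcal L_\sigma^{\otimes n})^G\cong H^0(Y_0,\mathcal F_n\otimes N^{\otimes n})$ for every $n$, identifying the graded algebra of $G$-invariant sections attached to $\mathcal L_\sigma$ with the $N$-twist $\bigoplus_n\mathcal F_n\otimes N^{\otimes n}$ of the algebra $\bigoplus_n\mathcal F_n$ attached to $\mathcal L_{\chi_\sigma}$. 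The relative $\operatorname{Proj}$ over $Y_0$ is unchanged under twisting a graded algebra by powers of an invertible sheaf pulled back from the base, and the semistable locus is exactly the preimage in $U$ of the locus where some positive-degree invariant section is non-zero; hence $U^{s_{\mathcal L_\sigma}}=U^{s_{\chi_\sigma}}$. Since $Y_0$ is affine, this relative $\operatorname{Proj}$ coincides with the $\operatorname{Proj}$ of the algebra of global sections, so the induced quotients agree as well, namely $U^{s_{\mathcal L_\sigma}}\sslash G=U\sslash_{\chi_\sigma}G$. Combined with the already established equality $U^\sigma=U^{s_{\mathcal L_\sigma}}$, this proves both assertions.

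The conceptual steps are immediate from the Proposition and (\ref{picG}); the only real obstacle is the last step, namely checking that passing from $\mathcal L_{\chi_\sigma}$ to $\mathcal L_{\chi_\sigma}\otimes\pi^*N$ commutes with forming $G$-invariants and with relative $\operatorname{Proj}$. This is precisely where the location of $N$ on the quotient is essential, so that the $G$-action on $\pi^*N$ is fibrewise trivial, and where one must use that each twisting sheaf $N^{\otimes n}$ is invertible on $Y_0$ in order to match the corresponding open subschemes, and thereby the semistable loci.
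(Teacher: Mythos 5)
Your proof is correct and follows essentially the same route as the paper: both deduce from the Proposition and the exactness of (\ref{picG}) that $\mathcal L_\sigma\otimes\mathcal L_{\chi_\sigma}^{-1}=\pi^*N$ for some $N\in\Pic(U\sslash G)$, and then use that twisting a $G$-linearized bundle by the pullback of a line bundle from the affine quotient changes neither the semistable locus nor the GIT quotient --- a step the paper leaves implicit and you spell out via the projection formula and the invariance of relative $\operatorname{Proj}$ under twisting by an invertible sheaf. One wording to repair: the semistable locus is not the preimage under $U\to U\sslash G$ of a subset of the quotient (it is in general not saturated); what your argument actually establishes, after trivializing $N$ on principal affine opens of $U\sslash G$ and clearing denominators to globalize local invariant sections, is that semistability can be tested locally over the affine quotient, where the two linearized bundles agree equivariantly.
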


  \begin{cor} \label{stab-simp}
Under the isomorphism $\phi:(\U, [F])\to (\V,0) $ given in  (\ref{localGiso}), points in $\U$
  corresponding to  polystable objects in $\M_\sigma(v,\D)$ go to points in 
  $\V$ corresponding to direct sum of simple representations.
  In particular there exists a simple representation in a neighborhood
  of $0\in \V$, if and only if there exists a stable object in a neighborhod of $[F]$.
  \end{cor}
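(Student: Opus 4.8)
The plan is to deduce the whole statement from the single structural fact that $\phi$ is a $G$-equivariant isomorphism, by matching closed $G$-orbits and stabilizers on the two sides. The relevant stability condition here is the one at which $F$ is polystable; by construction (the maps $\gamma$ and $\Xi$ of the previous section send this $Z$ to the origin) it corresponds to the origin $\theta=0$ in $\W_{\mathbf n}$, the regime in which $\theta$-stability of a representation is simplicity and $\theta$-polystability is semisimplicity, i.e. being a direct sum of simple representations. Recall from Proposition \ref{et-kuran} that $\U$ carries a $G$-linearized deformation of $F$, so that each $u\in\U$ determines an object $F_u$ with $F_0=F$. The first thing I would record is that, on the moduli side, $F_u$ is polystable if and only if the $G$-orbit of $u$ is closed in $\U$: this is the good moduli space property (\ref{s-eq}) transported through the slice \'etale (\ref{sl-et}), since the closed-orbit representative of an $S$-equivalence class is exactly the graded object of its Jordan--H\"older filtration, hence the polystable object. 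Because $\U$ is a saturated neighbourhood, $\U\dslash G$ is an \'etale neighbourhood of $[F]$ and closedness of orbits may be computed inside $\U$.

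On the quiver side the analogous statement is standard: for the conjugation action of $G=\prod_i\GL(n_i)$ on $\Rep(\ov Q,\mathbf n)$, and hence on the invariant subvariety $\mu_{\mathbf n}^{-1}(0)$, a representation has a closed orbit if and only if it is semisimple, i.e. a direct sum of simple representations (this is the $\theta=0$ instance of King's picture recalled in the Quivers section). Since $\phi:\U\to\V$ is a $G$-equivariant analytic isomorphism onto the saturated open set $\V\subset\mu_{\mathbf n}^{-1}(0)$, it carries closed $G$-orbits to closed $G$-orbits; combining this with the two characterizations above yields immediately that $F_u$ is polystable if and only if $\phi(u)$ is a direct sum of simple representations. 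This proves the first assertion.

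For the refinement to stable objects and simple representations I would argue through stabilizers. A polystable object $F_u=\oplus_j G_j^{m_j}$, with the $G_j$ distinct $\sigma$-stable of the phase of $F$, is stable precisely when $\Aut(F_u)=\CC^*$, that is when there is a single summand of multiplicity one; likewise a semisimple representation is simple precisely when its automorphism group is $\CC^*$. Under the slice \'etale the stabilizer $G_u$ is identified with $\Aut(F_u)$, while on the quiver side the stabilizer of $\phi(u)$ is its automorphism group as a representation. As $\phi$ is $G$-equivariant, $G_u$ and $G_{\phi(u)}$ coincide as subgroups of $G$; hence $\Aut(F_u)=\CC^*$ if and only if $\Aut(\phi(u))=\CC^*$, so $F_u$ is stable if and only if $\phi(u)$ is simple. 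Since $\phi$ is an isomorphism of pointed neighbourhoods, the existence of a stable object arbitrarily close to $[F]$ is equivalent to the existence of a simple representation arbitrarily close to $0\in\V$, which is the final claim.

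I expect the only delicate point to be the first step: correctly importing the good moduli space and $S$-equivalence dictionary through the \'etale slice so as to identify \emph{polystable object} with \emph{closed $G$-orbit}, and checking that this is compatible, via the $G$-equivariant isomorphism $\phi$ and the saturatedness of $\U$ and $\V$, with the affine GIT picture ($\theta=0$) on $\mu_{\mathbf n}^{-1}(0)$. Everything else, namely that the closed orbits of quiver representations are exactly the semisimple ones and that the stabilizer computation separates stable from merely polystable and simple from merely semisimple, is standard and, crucially, uses nothing about the normality of $\U$ or $\V$, as is required for the application in the proof of Theorem \ref{normM}.
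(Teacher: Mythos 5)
Your argument is correct, but it takes a genuinely different route from the paper's. The paper gives no self-contained proof at this point: Corollary \ref{stab-simp} is presented as a consequence of the VGIT comparison just established, namely that the pullback $\mathcal L_\sigma$ of the Bayer--Macr\`i class and King's character line bundle $\mathcal L_{\chi_\sigma}$ have the same image under $\delta$ in the Knop--Kraft--Vust sequence (\ref{picG}), hence differ by a pullback from $U\dslash G$, so that $U^{s_{\mathcal L_\sigma}}=U^{s_{\chi_\sigma}}$ and the corresponding GIT quotients agree; King's theorem (recalled in the Quivers section) then converts $\chi_\theta$-GIT (semi)stability into slope (semi)stability, which at the trivial character $\theta=0$ (the relevant case, as you correctly note via $\gamma(Z_0)=0$) identifies polystable objects with semisimple representations and stable objects with simple ones. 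You bypass the line-bundle/character comparison and King's theorem altogether, working directly with the two closed-orbit dictionaries --- polystable $\Leftrightarrow$ closed $G$-orbit, via $S$-equivalence for good moduli spaces and the cartesian slice diagram, and closed orbit $\Leftrightarrow$ semisimple, the standard affine GIT fact for quiver representations --- together with a stabilizer comparison for the stable/simple refinement, everything being transported through the $G$-equivariant isomorphism $\phi$ between saturated neighbourhoods. Your route is more elementary and makes transparent the parenthetical claim in the proof of Theorem \ref{normM} that the corollary presupposes neither normality nor irreducibility of $\U$ and $\V$; note that the sequence (\ref{picG}) is quoted in the paper for an \emph{irreducible} variety, so your argument is arguably better suited to that application. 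The paper's route, on the other hand, costs nothing extra in context, since the character identification is needed anyway for the genuine variation statements (diagram (\ref{loc-VGIT})), where $\sigma\neq\sigma_0$ and the relevant representations are $\theta$-stable rather than simple. One point you should make explicit: for the implication ``$\phi(u)$ simple $\Rightarrow F_u$ stable'' you need the full identification $G_u\cong\Aut(F_u)$, not merely the injection that \'etaleness provides; this is exactly what the cartesian diagram in (\ref{sl-et}), part {\bf(b)}, gives, since it identifies the fibers of $[U/G]\to U\dslash G$ with the fibers of $\M_\sigma(v,\D)\to M_\sigma(v,\D)$, and automorphism groups of points are computed inside those fibers.
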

  
 By Claim \ref{cart-trap} we  have a cartesian diagram
 \be\label{loc-VGIT}
\xymatrix{
 M_{\sigma}(v,\D) \ar[d]_\rho&U\sslash_{\chi_\sigma}G\ar[d]^h\ar[l]_\eta\,,   \\
M_{\sigma_0}(v,\D) &U\dslash G\ar[l]^\epsilon    
} \qquad U\sslash_{\chi_\sigma}G=U^\sigma\sslash G
\ee
Of course the left vertical arrow  must be compared with the variation of GIT quotients
$$
\xymatrix{
\mu^{-1}(0)\sslash_{\chi_\sigma}G\ar[d]^h\\
\mu^{-1}(0)\dslash G   
} \qquad\qquad\qquad\qquad\qquad
$$
 This is best done in the analytic category. The local analytic $G$-equivariant  isomorphism $\phi:\U\to\V$, given in (\ref{localGiso}), together with diagram (\ref{loc-VGIT}), 
give  a commutative diagram 
 
$$
\xymatrix{
U^\sigma \sslash G \ar[d] & \ar[d] \ar@{_{(}->}[l] \U \sslash_{\chi_\sigma} G \ar[r]^\sim\ar[d]^{h} &  \ar[d]\V \sslash_{\chi_\sigma} G  \ar@{^{(}->}[r] \ar[d]^{\bf h}& \ar[d]  \mu^{-1}(0)  \sslash_{\chi_\sigma} G\\
U\sslash G& \ar@{_{(}->}[l]\U \sslash G \ar[r]^\sim & \V\sslash G  \ar@{^{(}->}[r]  & \mu^{-1}(0)  \sslash G
} 
$$
showing that the variation of GIT quotients $\bf h$, given by wall crossing for quivers
corresponds exactly to the variation $h$ of moduli spaces around a wall in
$\Stab^\dag(\D)$.

\endproof

%%%%%%%%%%%%%%%%%%
%%%%%%%%%%%%%%%%%%

 \bibliographystyle{amsalpha}
 \bibliography{ASQuiv}

\end{document}